\theoremstyle{plain}
\newtheorem*{thm*}{Theorem}
\newtheorem{thm}{Theorem}[section]
\Crefname{thm}{Theorem}{Theorems}
\newtheorem*{lem*}{Lemma}
\newtheorem{lem}[thm]{Lemma}
\Crefname{lem}{Lemma}{Lemmas}
\newtheorem*{claim*}{Claim}
\crefname{claim}{Claim}{Claims}
\Crefname{claim}{Claim}{Claims}
\newtheorem{prop}[thm]{Proposition}
\Crefname{prop}{Proposition}{Propositions}
\crefname{cor}{Corollary}{Corollaries}
\newtheorem{conj}[thm]{Conjecture}
\crefname{conj}{Conjecture}{Conjectures}
\Crefname{qn}{Question}{Questions}
\newtheorem{obs}[thm]{Observation}
\Crefname{obs}{Observation}{Observations}
\Crefname{ex}{Example}{Examples}
\theoremstyle{definition}
\Crefname{prob}{Problem}{Problems}
\newtheorem{defn}[thm]{Definition}
\Crefname{defn}{Definition}{Definitions}
\theoremstyle{remark}
\renewenvironment{proof}[1][]{\begin{trivlist}
\item[\hspace{\labelsep}{\bf\noindent Proof#1.\/}] }{\qed\end{trivlist}}
\newcommand{\remove}[1]{}
\newcommand{\ceil}[1]{
    \lceil #1 \rceil
}
\newcommand{\floor}[1]{
    \lfloor #1 \rfloor
}
\newcommand{\eps}{\varepsilon}
\begin{document}

\title{\vspace{-0.9cm} $2$-factors with $k$ cycles in Hamiltonian graphs}

\author{
	Matija Buci\'c\thanks{
	    Department of Mathematics, 
	    ETH Zurich,
	    Switzerland;
	    e-mail: \texttt{matija.bucic}@\texttt{math.ethz.ch}.
	}
	\and
	Erik Jahn\thanks{
	    Department of Mathematics, 
	    ETH Zurich,
	    Switzerland;
	    e-mail: \texttt{ejahn}@\texttt{student.ethz.ch}.
	}
	\and
        Alexey Pokrovskiy\thanks{
        Department of Economics, Mathematics, and Statistics, Birkbeck,
        UK;
        e-mail: \texttt{dr.alexey.pokrovskiy}@\texttt{gmail.com}.
    }
    \and
	Benny Sudakov\thanks{
	    Department of Mathematics, 
	    ETH Zurich,
	    Switzerland;
	    e-mail: \texttt{benjamin.sudakov}@\texttt{math.ethz.ch}.     
		Research supported in part by SNSF grant 200021-175573.
	}
}
\date{}

\maketitle

\begin{abstract}
    \setlength{\parskip}{\smallskipamount}
    \setlength{\parindent}{0pt}
    \noindent
    
    \vspace{-0.9cm} 
A well known generalisation of Dirac's theorem states that if a graph $G$ on $n\ge 4k$ vertices has minimum degree at least $n/2$ then $G$ contains a $2$-factor consisting of exactly $k$ cycles. This is easily seen to be tight in terms of the bound on the minimum degree. However, if one assumes in addition that $G$ is Hamiltonian it has been conjectured that the bound on the minimum degree may be relaxed. This was indeed shown to be true by S\'ark\"ozy. In subsequent papers, the  minimum degree bound has been improved, most recently to $(2/5+\varepsilon)n$ by DeBiasio, Ferrara, and Morris. On the other hand no lower bounds close to this are known, and all papers on this topic ask whether the minimum degree needs to be linear. We answer this question, by showing that the required minimum degree for large Hamiltonian graphs to have a $2$-factor consisting of a fixed number of cycles is sublinear in $n.$ 
\end{abstract}

\section{Introduction}

A celebrated theorem by Dirac \cite{Dirac52} asserts the existence of a Hamilton cycle whenever the minimum degree of a graph $G$, denoted $\delta(G)$, is at least $\frac{n}{2}$. Moreover, this is best possible as can be seen from the complete bipartite graph $K_{\floor{\frac{n-1}{2}},\ceil{\frac{n+1}{2}}}$. Dirac's theorem is one of the most influential results in the study of Hamiltonicity of graphs and has seen generalisations in many directions over the years (for some examples consider surveys \cite{lisurvey,gouldsurvey,bennysurvey} and references therein). In this paper we discuss one such direction by considering what conditions ensure that we can find various \textit{$2$-factors} in $G$. Here, a \textit{$2$-factor} is a spanning 2-regular subgraph of $G$ or equivalently, a union of vertex-disjoint cycles that contains every vertex of $G$ and hence, $2$-factors can be seen as a natural generalisation of Hamilton cycles. Brandt, Chen, Faudree, Gould and Lesniak \cite{brandt97} proved that for a large enough graph the same degree condition as in Dirac's theorem, $\delta(G)\ge n/2$, allows one to find a $2$-factor with exactly $k$ cycles. 

\begin{thm}
If $k \geq 1$ is an integer and $G$ is a graph of order $n \geq 4k$ such that $\delta(G) \geq \frac{n}{2}$, then $G$ has a $2$-factor consisting of exactly $k$ cycles.
\end{thm}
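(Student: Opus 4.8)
The plan is to prove the statement by induction on $k$, the base case $k=1$ being Dirac's theorem itself. For the inductive step it suffices to establish the following reduction lemma: whenever $\delta(G)\ge n/2$ and $n$ is at least a small absolute constant, $G$ contains a cycle $Q$ of length $3$ or $4$ with $\delta\bigl(G-V(Q)\bigr)\ge\bigl(n-|V(Q)|\bigr)/2$. Granting the lemma, delete such a $Q$: since $|V(Q)|\le 4$ and $n\ge 4k$, the graph $G'=G-V(Q)$ has $n'=n-|V(Q)|\ge 4(k-1)\ge 4$ vertices and still satisfies $\delta(G')\ge n'/2$, so the induction hypothesis gives $G'$ a $2$-factor with exactly $k-1$ cycles; adjoining $Q$ produces a $2$-factor of $G$ with exactly $k$ cycles. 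At each stage the current graph has at least $4k'\ge 8$ vertices, so the lemma is only ever invoked where it applies.

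It is worth seeing why the hypothesis $n\ge 4k$ is sharp and what it has to do with the lemma. In $G=K_{n/2,n/2}$ every cycle alternates between the two sides, hence has even length at least $4$ and equally many vertices on each side, so a $2$-factor with $k$ cycles forces $n\ge 4k$; and the lemma is witnessed there by a balanced $4$-cycle $a_1b_1a_2b_2$, whose removal leaves $K_{n/2-2,\,n/2-2}$ with every surviving vertex having lost exactly $2$ neighbours, so the minimum degree drops from $n/2$ to exactly $(n-4)/2$ — just barely enough. The same accounting explains why one should expect to remove a $4$-cycle rather than a triangle: if $Q$ is a $t$-cycle then, summing $|N(u)\setminus V(Q)|\le n/2-2$ over the $t$ vertices $u\in V(Q)$, the average of $|N(v)\cap V(Q)|$ over the $n-t$ vertices $v\notin V(Q)$ is at most $t(n/2-2)/(n-t)$, which is exactly the budget $t/2$ a near-minimum-degree vertex can afford only when $t=4$ (for $t=3$ or $t=5$ the average already exceeds the budget). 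So for near-regular $G$ one is essentially forced to use $t=4$, and the content of the lemma is that some $4$-cycle meets the budget at \emph{every} outside vertex at once; vertices of larger degree tolerate more neighbours in $Q$ and are never the difficulty.

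To prove the lemma I would first set aside the vertices of degree at least $(n+4)/2$, since for them any choice of $Q$ is harmless; only the low-degree vertices constrain the choice. If $G$ contains a triangle $T$ with $\delta(G-V(T))\ge(n-3)/2$ — which, since only near-minimum-degree vertices are constrained, amounts to no low-degree vertex outside $T$ having two neighbours in $T$ — then $T$ works, so assume there is no such triangle. If $G$ is triangle-free then, using $\delta\ge n/2$ (adjacent vertices have disjoint neighbourhoods, each of which must be independent), one checks that $G=K_{n/2,n/2}$, handled above. Otherwise one locates a good $4$-cycle by a stability argument: assuming that no $4$-cycle $Q$ satisfies $\delta(G-V(Q))\ge(n-4)/2$, the adjacency pattern of $G$ on and around any fixed $4$-cycle becomes so rigid that $G$ is pushed into one of a short list of highly structured graphs (the balanced complete bipartite graph and a few near-regular, circulant-type graphs), for each of which the required short cycle can be exhibited directly. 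Assembling these cases yields the lemma.

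The main obstacle is precisely this last step: proving that a ``good'' short cycle always exists. The averaging heuristic pins down $t=4$ as the only viable length for near-regular graphs but does not by itself produce a single $4$-cycle that is simultaneously good at every outside vertex; converting it into a proof needs a careful case analysis driven by the location of the low-degree vertices and the local structure of $G$ around a $4$-cycle, with the complete balanced bipartite graph (and a handful of other extremal configurations) treated by hand. The remaining ingredients — the induction, checking $n-|V(Q)|\ge 4(k-1)$, and verifying the finitely many small-$n$ cases so the lemma applies at every stage — are routine.
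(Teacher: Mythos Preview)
This theorem is not proved in the paper at all: it is quoted in the introduction as the result of Brandt, Chen, Faudree, Gould and Lesniak \cite{brandt97} and serves only as background motivation. There is therefore no proof in this paper to compare your proposal against.

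Assessing the proposal on its own merits: the inductive framework is sensible and is in the spirit of the original \cite{brandt97} argument, which also proceeds by peeling off a short cycle while preserving the Dirac condition. But what you have written is a plan, not a proof. The entire content of the theorem is your reduction lemma, and you do not prove it. The triangle-free case is fine (triangle-free with $\delta\ge n/2$ does force $K_{n/2,n/2}$), but the general case is deferred to an unspecified ``stability argument'' and an unlisted ``short list of highly structured graphs'', none of which you actually produce. Until that case analysis is written down and checked, there is a genuine gap. Incidentally, your averaging heuristic is miscomputed for $t=3$: the average $3(n/2-2)/(n-3)=\tfrac{3}{2}\cdot\tfrac{n-4}{n-3}$ is \emph{below} the budget $3/2$, not above it, so averaging alone does not exclude triangles; what makes triangles problematic in the near-regular case is the integrality constraint $|N(v)\cap Q|\le\lfloor 3/2\rfloor=1$, which is a different (and stronger) obstruction than the one you describe.
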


Once again, this theorem gives the best possible bound on the minimum degree, using the same example as for the tightness of Dirac's theorem above. This indicates that perhaps if we restrict our attention to Hamiltonian graphs, thereby excluding this example, a smaller minimum degree might be enough. That this is in fact the case was conjectured by Faudree, Gould, Jacobson, Lesniak and Saito \cite{Faudree05}.

\begin{conj}\label{conj:main}
For any $k \in \mathbb{N}$ there are constants $c_k <1/2,$ $n_k$ and $a_k$ such that any Hamiltonian graph $G$ of order $n\ge n_k$ with $\delta(G) \ge c_k n+a_k$ contains a $2$-factor consisting of $k$ cycles.
\end{conj}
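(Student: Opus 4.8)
\emph{Overall plan and reduction.} The plan is to prove the formally stronger statement that a \emph{sublinear} bound on $\delta(G)$ already suffices — which implies \Cref{conj:main} for all large $n$ — by reducing ``$k$ cycles'' to ``$2$ cycles''. Precisely, I would argue by induction on $k$ that a Hamiltonian graph $H$ with $\delta(H)\ge f(|V(H)|)$, for a suitable slowly growing $f$, has a $2$-factor with exactly $k$ cycles; the base case $k=1$ is just Hamiltonicity. For the induction step, the core lemma below yields a $2$-factor of $H$ consisting of two cycles — a short one $D$ with $|V(D)|\le\ell$, where $\ell=\ell(|V(H)|)$ is small compared to $f$, and a long one $C'$. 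Since $C'$ is a Hamilton cycle of $H':=H-V(D)$ and $\delta(H')\ge\delta(H)-\ell\ge f(|V(H')|)$ (using that $f$ grows slowly), the inductive hypothesis applied to $H'$ gives a $2$-factor of $H'$ with $k-1$ cycles, and adding $D$ yields the desired $2$-factor of $H$.

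\emph{The core lemma.} It remains to show that a Hamiltonian graph $H$ with $\delta(H)\ge f(|V(H)|)$ has a $2$-factor consisting of exactly two cycles, one with at most $\ell$ vertices. Fix a Hamilton cycle $C=u_1u_2\cdots u_mu_1$. The basic move is a \emph{pinch}: if for some $a<b$ with $3\le b-a\le\ell$ both $u_{a+1}u_b$ and $u_au_{b+1}$ are edges of $H$ — a \emph{short parallel pair} — then deleting $u_au_{a+1},u_bu_{b+1}$ and adding $u_{a+1}u_b,u_au_{b+1}$ splits $C$ into a short cycle on $\{u_{a+1},\dots,u_b\}$ and a cycle on the remaining vertices. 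Thus it suffices to find a short parallel pair in \emph{some} Hamilton cycle of $H$. This cannot in general be done for the given $C$: a cycle together with a circulant-type set of ``long'' chords has large minimum degree but no short chords at all, so chords must be created by P\'osa-type rotations.

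\emph{Rotations and counting.} Deleting an edge of $C$ gives a Hamilton path $P$; rotating at one endpoint while fixing the other produces, by the usual P\'osa/expansion estimates applied with $\delta(H)\ge f$, a large and structured family $\mathcal F$ of Hamilton paths, and closing them up a corresponding family of Hamilton cycles. I would then run a counting argument: if none of the cycles in $\mathcal F$ contained a short parallel pair, then for each of them every short chord fails to extend to a parallel pair, which is a strong neighbourhood--separation condition; aggregating these conditions over all of $\mathcal F$ and comparing with the number of edges forced by $\delta(H)\ge f(|V(H)|)$ yields a contradiction, provided $f$ is above the relevant sublinear threshold.

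\emph{Main obstacle.} The crux is exactly this last step. Rotations permute the cyclic order, so controlling which vertices become $C$-close and bounding the arc lengths of the resulting pinches is delicate, and one must extract enough \emph{mutually independent} short-chord candidates, spread around the cycle, for the count to close; the trade-off between the size of $\mathcal F$, the minimum degree, and the permitted arc length $\ell$ is what pins down how sublinear $f$ can be taken. The bookkeeping in the reduction — iterating $k-1$ short-cycle removals while keeping the degree above $f$ of the shrinking order — is comparatively routine and fine whenever $f$ is, say, polynomial or polylogarithmic. Finally, the much weaker bound actually demanded by \Cref{conj:main}, namely any $c_k<1/2$, has a far simpler proof: when $\delta(H)\ge(1/2-\varepsilon)|V(H)|$, for every $a$ the index set of $N(u_{a+1})$ and the index set of $N(u_a)$ shifted by one each cover more than half of $\{1,\dots,m\}$, so a parallel pair exists directly by pigeonhole, with $\Theta(m)$ choices of $b$ — enough slack to pick $k-1$ of them with pairwise laminar arcs.
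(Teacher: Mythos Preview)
Your reduction from $k$ cycles to $2$ cycles is a natural idea, but both of the arguments you offer for the base step have genuine gaps.

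\textbf{The pigeonhole argument for $c_k<\tfrac12$ is incorrect as stated.} With $\delta(H)\ge(\tfrac12-\varepsilon)m$ the sets $N(u_{a+1})$ and $N(u_a)-1$ each have size at least $(\tfrac12-\varepsilon)m$, which is \emph{less} than half of $\{1,\dots,m\}$, not more; inclusion--exclusion gives only $|A\cap B|\ge -2\varepsilon m$, so you cannot conclude that a parallel pair exists, let alone $\Theta(m)$ of them with laminar arcs. Indeed, on each anti\-diagonal $\{(i,j):i+j=s\}$ the ``no parallel pair'' condition just says the chord set is an independent set in a path, which still allows density up to $\tfrac12$; so chord density $(\tfrac12-\varepsilon)$ is not by itself an obstruction. (The known proofs of the conjecture for $c_k$ below $\tfrac12$ --- S\'ark\"ozy at $\tfrac12-\varepsilon$, DeBiasio--Ferrara--Morris at $\tfrac25+\varepsilon$ --- all go through the regularity lemma; a two-line pigeonhole would be surprising.)

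\textbf{The core lemma for the sublinear bound is the real crux, and it is not proved.} You correctly identify that for a single Hamilton cycle a short parallel pair need not exist, and propose to generate a family $\mathcal F$ of Hamilton cycles by P\'osa rotations and run a counting argument. But no such argument is given: you would need to show that the rotations produce enough \emph{independent} short-chord candidates, spread around the cycle, and that the failure of all of them to extend to parallel pairs contradicts the degree bound. Rotations scramble the cyclic order in a way that is very hard to control, and it is not at all clear that the double-counting closes. Note also that in graphs of large girth (which exist with minimum degree up to roughly $\log n/\log\log n$) every $2$-factor has all cycles of length at least the girth, so your $\ell$ is forced to grow with $n$; this is consistent with your bookkeeping remarks but shows that ``short'' cannot mean bounded.

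\textbf{Contrast with the paper.} The paper's route is entirely different and sidesteps the need for any cycle to be short. It encodes chords in a $2$-edge-coloured auxiliary graph $A$ on $E(H)$, shows that colour-alternating cycles in $A$ give $2$-factors in $G$, and --- crucially --- finds not just one alternating cycle but a large consistently ordered \emph{blow-up} of one, using a K\H{o}v\'ari--S\'os--Tur\'an/Erd\H{o}s argument. Inside this blow-up it exhibits explicit local ``going up'' and ``going down'' modifications that change the number of components of the associated $2$-factor by exactly one. This replaces your rotation/counting step by a clean structural mechanism and avoids the inductive peeling-off of short cycles altogether.
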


Faudree et al.\ prove their conjecture for $k=2$ with $c_2=5/12.$ 

The conjecture was shown to be true for all $k$ by S\'{a}rk\"{o}zy \cite{Sarkozy08} with $c_k=1/2-\eps$ for an uncomputed small value of $\eps>0.$ Gy\"ori and Li \cite{gyori12} announced that they can show that $c_k=5/11+\eps$ suffices. The best known bound was due to DeBiasio, Ferrara and Morris \cite{DeBiasio14} who show that  $c_k= \frac{2}{5}+\eps$ suffices.

On the other hand no constructions of very high degree Hamiltonian graphs without $2$-factors of $k$ cycles are known. Faudree et al.~ \cite{Faudree05} say ``we do not know whether a linear bound of minimum degree in Conjecture~\ref{conj:main} is appropriate''. Sark\"ozy~\cite{Sarkozy08} says ``the obtained bound on the minimum degree is probably far from
best possible; in fact, the ``right'' bound might not even be linear''. DeBiasio et al.~\cite{DeBiasio14} say ``one vexing aspect of Conjecture~\ref{conj:main} and the related work described here is that it is possible that a sublinear, or even constant, minimum degree would suffice to ensure a Hamiltonian graph has a 2-factor of the desired type''. In particular, in \cite{Faudree05,Sarkozy08,DeBiasio14} they all ask the question of whether the minimum degree needs to be linear in order to guarantee a $2$-factor consisting of $k$ cycles. We answer this question by showing that the minimum degree required to find $2$-factors consisting of $k$ cycles in Hamiltonian graphs is indeed sublinear in $n.$

\begin{thm}\label{thm:main}
For every $k\in \mathbb{N}$ and $ \eps>0$, there exists $N = N(k,\eps)$ such that if $G$ is a Hamiltonian graph on $n \geq N$ vertices with $\delta (G) \geq \eps n$, then $G$ has a $2$-factor consisting of $k$ cycles.
\end{thm}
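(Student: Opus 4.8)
The plan is to reduce \Cref{thm:main} to the following statement and then iterate it: \emph{if $G$ is Hamiltonian on $n$ vertices with $\delta(G)\ge \eps n$ and $n$ is large, then there is a set $W\subseteq V(G)$ with $3\le |W|\le C(\eps)$ such that both $G[W]$ and $G-W$ are Hamiltonian.} Granting this, I would apply it once to split off a first cycle on $W_1$ while keeping $G-W_1$ Hamiltonian; since $|W_1|\le C(\eps)$ we get $\delta(G-W_1)\ge \eps n-C(\eps)\ge \tfrac{\eps}{2}(n-|W_1|)$ for $n$ large, so the hypothesis survives with $\eps$ replaced by $\eps/2$. Iterating $k-1$ times (halving $\eps$ at each step is harmless because $k$ is fixed) peels off $k-1$ bounded cycles and leaves a Hamiltonian graph on $n-o(n)$ vertices, which is the $k$-th cycle. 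So everything comes down to the lemma.

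\textbf{Strategy for the lemma.} Fix a Hamilton cycle $H=v_0v_1\cdots v_{n-1}$. Since $\delta(G)\ge \eps n$, a Moore-type/BFS argument shows that for $n$ large $G$ has girth at most $4$, and in fact contains short cycles meeting essentially any prescribed part of $G$. I would try to take $W=V(C^\ast)$ for a short cycle $C^\ast$ that sits well on $H$, meaning $V(C^\ast)$ meets only boundedly many arcs of $H$ (so $H-V(C^\ast)$ is a union of boundedly many paths) and there are chords of $G$ joining the endpoints of these paths into a single cycle; then $G[W]\supseteq C^\ast$ and $G-W$ are both Hamiltonian. The simplest instance is a \emph{local cuttable configuration}: an index $a$ with $v_av_{a+2}\in E(G)$ and $v_{a-1}v_{a+3}\in E(G)$, for which $W=\{v_a,v_{a+1},v_{a+2}\}$ is a triangle and $G-W$ is $H$ with the arc $v_{a-1}v_av_{a+1}v_{a+2}v_{a+3}$ replaced by the edge $v_{a-1}v_{a+3}$; slightly more general variants (a bounded-length arc of $H$ whose endpoints and whose two outer neighbours are joined by chords, or a short cycle on two nearby $H$-edges together with the corresponding short chords, which covers near-bipartite examples like $G=K_{n/2,n/2}$) are handled the same way, reading off $W$ directly.

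\textbf{The structured case.} The dichotomy is: either some short cuttable configuration as above is present and we are done, or none is, in which case I expect the failure, combined with $\delta(G)\ge\eps n$, to force $G$ to be circularly rigid — essentially all chords of $H$ span roughly the same length $\lambda n$ and $G$ is close to a power or a blow-up of a cycle. For such $G$ one constructs $W$ explicitly: for instance, when $G$ resembles the ``distance-$\lambda n$'' circulant plus $H$, take the triangle $W=\{v_a,v_{a+1},v_{a-\lambda n}\}$ (using the Hamilton edge $v_av_{a+1}$ and two long chords); then $H-W$ consists of two arcs, and a short computation shows that two of the abundant long chords of $G$ re-close these arcs into one cycle, so $G-W$ is Hamiltonian.

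\textbf{Main difficulty.} The delicate step is the structural one: quantifying precisely how the absence of all short cuttable configurations, under the minimum-degree bound, pins down the global shape of $G$, and then checking that \emph{every} resulting rigid graph admits a bounded separable $W$. A minor point is ensuring throughout the iteration that removed sets stay of bounded size so the degree hypothesis is not destroyed; this is exactly what the $\eps\mapsto\eps/2$ bookkeeping above takes care of. As a possible way to avoid the explicit structural classification, one could instead run Pósa-type rotation--extension on $H$: the expansion guaranteed by $\delta(G)\ge\eps n$ makes it plausible that $H$ can always be rotated into a Hamilton cycle containing a short cuttable configuration, which would make the first case of the dichotomy always applicable.
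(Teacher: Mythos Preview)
Your reduction framework is sound: granting the peeling lemma, the iteration with $\eps\mapsto\eps/2$ does produce a $2$-factor with $k$ cycles. This is also a genuinely different route from the paper, which never peels. The paper instead builds an auxiliary $2$-edge-coloured graph $A$ on the edges of $H$, shows that vertex-disjoint unions of colour-alternating cycles in $A$ correspond to $2$-factors of $G$, uses the degree condition to find a large consistently-ordered blow-up of a short alternating cycle inside $A$, and then applies explicit ``going up'' and ``going down'' modifications within that blow-up to change the number of components of the associated $2$-factor by exactly $\pm 1$ until it equals $k$.

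The gap in your proposal is that the peeling lemma itself is not proved, and the sketch you give does not close. You offer a dichotomy --- either some bounded cuttable configuration is present, or $G$ is ``circularly rigid'' with all chords of roughly one length $\lambda n$ --- but neither direction is established. No finite list of cuttable configurations is specified, and there is no argument that the absence of \emph{all} of them forces a single dominant chord length; a priori chords of many different lengths can coexist without any of your particular local patterns (triangle-plus-outer-chord, $4$-cycle on two $H$-edges, etc.) appearing. Even in the rigid case, the assertion that ``a short computation shows that two of the abundant long chords re-close these arcs into one cycle'' is precisely the content of the lemma there and is not carried out. The closing Pósa rotation idea has the same defect: rotation--extension under $\delta(G)\ge\eps n$ generates many Hamilton cycles, but you give no mechanism for steering one of them to contain a cuttable configuration, and expansion alone does not obviously provide one.
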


\subsection{An overview of the proof}
We now give an overview of the proof to help the reader navigate the rest of the paper.

In the next section we will show that any $2$-edge-coloured graph $G$ on $n$ vertices with minimum degree being linear in both colours contains a blow-up of a short colour-alternating cycle. This is an auxiliary result which we need for our main proof. There, we also introduce ordered graphs and show a result which, given an ordering of the vertices of $G$ allows us to find a blow-up as above that is also consistent with the ordering, meaning that given two parts of the blow-up, vertices of one part all come before the other.

The main part of the proof appears in \Cref{sec:main}. The key idea is given a graph $G$ with a Hamilton cycle $H=v_1\ldots v_nv_1$, to build an auxiliary $2$-edge-coloured graph $A$ whose vertex set is the set of edges $e_i=v_iv_{i+1}$ of $H$ and for any edge $v_iv_j\in G\setminus H$ we have a red edge between $e_i$ and $e_j$ and a blue edge between $e_{i-1}$ and $e_{j-1}$ in $A$.
The crucial property of $A$ is that given any vertex disjoint union of colour-alternating cycles $S$ in $A$ one can find a $2$-factor $F(S)$ in $G$, consisting of the edges of $H$ which are not vertices of $S$ and the edges of $G$ not in $H$ which gave rise to the edges of $S$ in $A$. 

However, we can not control the number of cycles in $F(S)$ (except knowing that $F(S)$ has at most $|S|$ cycles), since it depends on the structure of $S$ and also on how $S$ is embedded within $A$. To circumvent this issue we will find instead a large blow-up of $S$. Then within this blow-up we show how to find a modification of $S$ denoted $S^+$ which has the property that $F(S^+)$ has precisely one cycle more than $F(S)$. Similarly, we find another modification $S^-$ such that the corresponding $2$-factor $F(S^-)$ has precisely one cycle less than $F(S)$. Since the number of cycles in $F(S)$ is bounded, if our blow-up of $S$ is sufficiently large we can perform these operations multiple times and therefore obtain a $2$-factor with the target number of cycles.

\section{Preliminaries}\label{sec:prelim}
Let us first fix some notation and conventions that we use throughout the paper. For a graph $G=(V,E)$, let $\delta(G)$ denote its minimum degree, $\Delta(G)$ its maximum degree and $d(v)$ the degree of a vertex $v \in V$. For us, a \emph{$2$-edge-coloured graph} is a triple $G = (V, E_1, E_2)$ such that both $G_1 = (V, E_1)$ and $G_2 = (V,E_2)$ are simple graphs. We always think of $E_1$ as the set of \emph{red} edges and of $E_2$ as the set of \emph{blue} edges of $G$. Accordingly, we define $\delta_1(G)$ to be the minimum degree of red edges of $G$ (that is $\delta(G_1)$), and analogously $\Delta_1(G)$, $\delta_2(G)$, etc. Note that with our definition the same two vertices may be connected by two edges with different colours. In this case, we say that $G$ has a \emph{double edge}. A \textit{blow-up} $G(t)$ of a $2$-edge coloured graph $G$ (with no two vertices joined by both a red and a blue edge) is constructed by replacing each vertex $v$ with a set of $t$ independent vertices and adding a complete bipartite graph between any two such sets corresponding to adjacent vertices in the colour of their edge. When working with \emph{digraphs} we always assume they are simple, so without loops and with at most one edge from any vertex to another (but we allow edges in both directions between the same two vertices).

\subsection{Colour-alternating cycles}\label{subs:cycles}
In this subsection, our goal is to prove that any $2$-edge-coloured graph, which is dense in both colours contains a blow-up of a colour-alternating cycle. We begin with the following auxiliary lemma that will only be used in the subsequent lemma where we will apply it to a suitable auxiliary digraph to give rise to many colour-alternating cycles.
\begin{lem}
Let $k \ge 2$ be a positive integer. A directed graph on $n$ vertices with minimum out-degree at least $\frac{n\log (2k)}{k-1}$ has at least $\frac{n^\ell}{2k^{\ell+1}}$ cycles of length $\ell$ for some fixed $2\le \ell\le k.$
\end{lem}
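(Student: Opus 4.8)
The plan is to count walks of bounded length and then extract cycles, using a standard dyadic/averaging argument on the out-degree. First I would observe that since every vertex has out-degree at least $d := \frac{n\log(2k)}{k-1}$, a greedy argument produces many directed walks: starting from any vertex and repeatedly following out-edges, the number of walks of length $k$ (counted with the vertex sequence) is at least $n d^k$ — but many of these are not paths. The key point is to show that a substantial fraction of these walks must \emph{close up} into short cycles, because the out-degree is large enough that short walks cannot all be ``spread out''. Concretely, I would fix a vertex $v$ and look at the number of walks of length $i$ starting at $v$ for $i = 1, \dots, k$; if none of these walks revisits a vertex for all $i \le k$, then the number of length-$i$ walks from $v$ grows like $d^i$, and in particular the number of \emph{vertices} reachable from $v$ in at most $k$ steps would have to be compared against $n$. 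The choice $d \approx \frac{n \log(2k)}{k-1}$ is exactly the threshold at which $(1 + \text{something})^{k-1}$-type growth forces $d^k$ to exceed the trivial bound $n \cdot (\text{number of walks})$, creating a collision and hence a closed walk, which contains a cycle of length $\ell \le k$.

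More carefully, the cleaner route is via the adjacency matrix $M$ of the digraph: the trace $\operatorname{tr}(M^\ell)$ counts closed walks of length $\ell$, and every closed walk of length $\ell$ that is not ``reducible'' to a shorter one gives a genuine cycle of length $\ell$ (with multiplicity at most $\ell$ for the choice of starting point). The row sums of $M$ are all $\ge d$, so $\mathbf{1}^T M^\ell \mathbf{1} \ge n d^\ell$, i.e. the total number of walks of length $\ell$ is at least $n d^\ell$. I would then argue that the number of walks of length $\ell$ that are \emph{not} closed and do not contain a cycle of length $\le k$ — i.e. genuine paths — is at most $n \cdot n^{\ell-1}/(\text{something})$... actually the clean statement: the number of length-$\ell$ walks which are \emph{injective} (paths) is at most $n^\ell/\ell!$ crudely, or more usefully, if the digraph had \emph{no} cycle of length in $[2,k]$ with the claimed abundance, I would derive a contradiction by summing a geometric-type series over $\ell = 2, \dots, k$. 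Set $N_\ell$ = number of (not-necessarily-closed) walks of length $\ell$. Each such walk either is a path, or contains a cycle of length $\le \ell \le k$. A walk containing a cycle of length $j$ can be built from a shorter walk, so $\sum_{\ell \le k} N_\ell \le (\text{\# paths}) + \sum_{j=2}^k (\text{\# cycles of length } j)\cdot (\text{ways to complete})$; pushing the parameters through, if every cycle count were below $\frac{n^\ell}{2k^{\ell+1}}$ the right side would be too small to match $\sum_{\ell \le k} n d^\ell$, given our value of $d$. This yields the existence of some $2 \le \ell \le k$ with at least $\frac{n^\ell}{2k^{\ell+1}}$ cycles of length $\ell$.

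The honest version I would write up: let $d = \delta^+ \ge \frac{n\log(2k)}{k-1}$ and suppose for contradiction that for every $\ell \in \{2, \dots, k\}$ the number of directed cycles of length $\ell$ is less than $\frac{n^\ell}{2k^{\ell+1}}$. For a vertex $v$, let $f_i(v)$ be the number of directed walks of length $i$ from $v$ that are \emph{paths} (all vertices distinct). I would show by induction that, under the contradiction hypothesis, $f_i(v)$ stays close to $d^i$ — precisely, each extension of a path-walk of length $i-1$ either stays a path-walk or closes a cycle of length $\le i$, and the total number of walks closing cycles is controlled by the assumed cycle bounds; hence $\sum_v f_k(v) \ge n d^k - (\text{small error})$. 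But $f_k(v) \le$ (number of vertices reachable from $v$ in exactly $k$ steps times bound) — in fact trivially $\sum_v f_k(v) \le n \cdot n^{k-1}$ is too weak, so instead I use $f_k(v) \le \binom{n-1}{k}k! \le n^k$, and crucially compare: $d^k = \left(\frac{n \log(2k)}{k-1}\right)^k$ vs $n^k$ — these are comparable, so this alone doesn't close it. The \emph{right} inequality is the one that makes $\log(2k)$ appear: the number of closed walks must be counted via $\operatorname{tr}(M^\ell) \ge$ something, or one uses that $\prod$ or iterated neighbourhood expansion gives $(1 + \frac{d}{n} \cdot \text{const})^{k}$-growth which, with $d = \frac{n\log 2k}{k-1}$, beats $2k$; I expect the main obstacle to be getting these constants to line up so that the error terms from the ``few short cycles'' assumption are genuinely dominated, and I would handle it by choosing the dyadic bound $\frac{1}{2k^{\ell+1}}$ precisely to absorb a geometric sum $\sum_{\ell=2}^{k} k^{-\ell}$ against the main term. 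The combinatorial heart — short walks in a digraph of large out-degree must close up — is standard (it underlies the classical bound on girth), and the novelty here is only the quantitative bookkeeping to extract \emph{many} cycles of a \emph{single} length $\ell$ rather than just one cycle.
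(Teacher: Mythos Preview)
Your proposal is not a proof but a survey of half-formed approaches, and none of them closes. The walk-counting route founders exactly where you say it does: with $d=\frac{n\log(2k)}{k-1}<n$ you have $nd^k<n^{k+1}$, so comparing the number of length-$k$ walks to the trivial upper bound on paths gives nothing. Your fallback, bounding walks that contain a short cycle under the contradiction hypothesis, is never carried out, and the bookkeeping you allude to (a geometric sum in $k^{-\ell}$ absorbing error terms) is not set up in any usable form. The trace argument is likewise only named: $\operatorname{tr}(M^\ell)$ counts closed walks, not cycles, and you give no mechanism to pass from one to the other quantitatively. Most tellingly, you never produce an inequality in which the specific threshold $\frac{n\log(2k)}{k-1}$ plays its intended role.

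The paper's argument is completely different and much shorter: sample $k$ vertices $v_1,\dots,v_k$ independently and uniformly (with repetition). For each $i$, the probability that $v_i$ has no out-neighbour among the other $k-1$ sampled vertices is at most $(1-\frac{\log(2k)}{k-1})^{k-1}\le\frac{1}{2k}$; this is precisely where the $\log(2k)$ in the hypothesis is used. A union bound then shows that with probability at least $1/2$ every sampled vertex has an out-neighbour in the sample, forcing a directed cycle inside $\{v_1,\dots,v_k\}$. Hence at least $n^k/2$ ordered $k$-tuples contain a cycle; pigeonhole on the cycle length gives some $\ell\le k$ with at least $n^k/(2k)$ tuples containing a $C_\ell$, and dividing by the overcount $k^\ell n^{k-\ell}$ (at most $k^\ell$ ways to place the cycle among the $k$ slots, $n^{k-\ell}$ ways to fill the rest) yields the bound $\frac{n^\ell}{2k^{\ell+1}}$. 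The missing idea in your attempt is this random-sampling trick, which converts the out-degree condition directly into a probability estimate rather than trying to push inequalities on walk counts.
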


\begin{proof}
Let us sample $k$ vertices $v_1,\ldots, v_{k}$ from $V(G),$ independently, uniformly at random, with repetition. We denote by $X_i$ the event that vertex $v_i$ has no out-neighbour in $S:=\{v_1,\ldots, v_k\}.$ We know that $\mathbb{P}(X_i)\le \left(1-\frac{\log (2k)}{k-1}\right)^{k-1}\le \frac{1}{2k}.$ If no $X_i$ occurs then the subgraph induced by $S$ has minimum out-degree at least $1$ so contains a directed cycle. The probability of this occurring is at least:
$$ \mathbb{P}\left(\overline{X_1}\cap \ldots \cap \overline{X_k}\right)=1-\mathbb{P}(X_1\cup \ldots \cup X_k)\ge 1-k\mathbb{P}(X_i) \ge 1/2,$$
where we used the union bound. This means that in at least $n^k/2$ outcomes we can find a cycle of length at most $k$ within $S.$ In particular, there is an $\ell \le k$ such that in at least $\frac{n^k}{2k}$ outcomes the cycle we find has length exactly $\ell$. Note that the same cycle might have been counted multiple times, but at most $k^\ell n^{k-\ell}$ times. This implies that $C_\ell$ occurs at least $\frac{n^\ell}{2k^{\ell+1}}$ times.  
\end{proof}

Now, we use this lemma to conclude that there are many copies of some short colour-alternating cycle in any $2$-edge-coloured graph which has big minimum degree in both colours.

\begin{lem}\label{lem:many-cycles}
For every $\gamma \in (0,1)$ there exist $c=c(\gamma), L = L(\gamma)$ and  $K = K(\gamma)$ such that, if $G$ is a $2$-edge-coloured graph on $n \geq K$ vertices satisfying $\delta_1(G), \delta_2(G) \geq \gamma n$, then $G$ contains at least $cn^\ell$ copies of a colour-alternating cycle of some fixed length $4 \le \ell \le L$.
\end{lem}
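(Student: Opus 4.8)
The plan is to build an auxiliary digraph on $V(G)$ that encodes colour-alternating behaviour, apply the previous lemma to it, and then post-process the resulting (possibly degenerate) short directed cycles into honest colour-alternating cycles of even length at least $4$. Concretely, orient each red edge arbitrarily in both directions and each blue edge in both directions, but instead work on the ``state space'' $V(G) \times \{1,2\}$: a vertex $(v, 1)$ should be thought of as ``arrived at $v$ along a blue edge, must leave red'', and $(v,2)$ as ``arrived along red, must leave blue''. Put an arc from $(u,1)$ to $(v,2)$ whenever $uv$ is a red edge, and from $(u,2)$ to $(v,1)$ whenever $uv$ is a blue edge. This digraph has $2n$ vertices and minimum out-degree at least $\gamma n = \frac{\gamma}{2}\cdot 2n$, so choosing $k = k(\gamma)$ large enough that $\frac{\log(2k)}{k-1} \le \frac{\gamma}{2}$, the previous lemma yields some fixed $2 \le \ell \le k$ with at least $\frac{(2n)^\ell}{2k^{\ell+1}} \ge c' n^\ell$ directed cycles of length $\ell$ in this digraph.

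A directed cycle of length $\ell$ in the auxiliary digraph projects to a closed walk $w_0 w_1 \cdots w_{\ell-1} w_0$ in $G$ whose edges alternate strictly in colour as we traverse them; in particular $\ell$ must be even (the state coordinate forces this), so $\ell \ge 2$ is even. The remaining issue is that this closed walk need not be a cycle: vertices of $G$ may repeat (two states $(v,1)$ and $(v,2)$ project to the same $v$, and even a fixed state may be visited — no, in a directed cycle each auxiliary vertex appears once, but a vertex $v$ of $G$ can still appear as both $(v,1)$ and $(v,2)$). If $\ell = 2$ the walk is $uvu$ using a red and a blue edge between $u$ and $v$, which is a double edge, not a cycle; if $\ell \ge 4$ but some $G$-vertex repeats, the walk decomposes into shorter closed alternating walks. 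So I first discard these degenerate cases and show many non-degenerate ones survive: the number of closed alternating walks of length $\ell$ in which some vertex of $G$ repeats is $O(n^{\ell-1})$ (fix the repeated vertex and one of its two occurrences, losing a factor of $n$), hence for $n$ large at least $\tfrac12 c' n^\ell$ of the counted cycles project to genuine colour-alternating cycles in $G$ — which are then automatically of even length $4 \le \ell \le 2k =: L$. Finally, partition these surviving cycles by their length $\ell \in \{4, 6, \ldots, L\}$; by pigeonhole one length class contains at least $\frac{c' n^\ell}{2(L/2)} \ge c n^\ell$ of them, giving the claimed $cn^\ell$ copies of a single colour-alternating cycle length. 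Setting $K = K(\gamma)$ large enough to absorb the lower-order terms completes the proof.

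The main obstacle I anticipate is purely bookkeeping rather than conceptual: making sure the auxiliary digraph is set up so that its directed cycles correspond precisely to colour-alternating closed walks of even length (in particular ruling out a walk that uses the same $G$-edge twice consecutively, which the state construction handles automatically since consecutive arcs have opposite ``colour'' meaning), and then carefully bounding the number of degenerate walks by $O(n^{\ell-1})$ so that they can be subtracted off. One should also double-check that $\ell \ge 4$: the construction forces $\ell$ even and $\ge 2$, and the $\ell = 2$ walks are exactly the double edges, which are excluded as degenerate; after deleting degenerate walks we may still a priori have $\ell = 2$ contributing nothing, so the pigeonhole should be run only over the even values $4 \le \ell \le L$, and one must confirm that a positive proportion of the non-degenerate cycles indeed has length $\ge 4$ — which holds because, by definition, a length-$2$ closed walk in $G$ is never a simple cycle, so every non-degenerate (simple) one has length $\ge 3$ and hence, being even, $\ge 4$.
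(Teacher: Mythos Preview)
Your overall strategy---encode colour-alternation via an auxiliary digraph, invoke the previous lemma to obtain many short directed cycles, then discard the degenerate projections---is sound and close in spirit to the paper's argument. There is, however, a genuine gap exactly at the point you flag as needing a ``double-check''. The previous lemma hands you a \emph{single} value of $\ell$; there is no further pigeonhole over lengths available afterwards. If that value happens to be $\ell=2$ (which is perfectly possible: take both colour classes to contain $\Theta(n^2)$ common edges, so that the state-space digraph has $\Theta(n^2)$ directed $2$-cycles), then \emph{every} directed cycle you have counted projects to a two-step walk $uvu$ along a double edge, and after discarding degenerates you are left with nothing. Your closing sentence (``every non-degenerate one has length $\ge 4$'') is true but vacuous here: it does not show that any non-degenerate cycle survives among the $c'n^\ell$ you started with. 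Note also that your bound ``degenerate walks are $O(n^{\ell-1})$'' simply fails for $\ell=2$, since there can be $\Theta(n^2)$ double edges. Opening up the proof of the previous lemma does not help either, because the pigeonhole there is over the length of \emph{one} cycle found in each good sample, and samples that contain a $2$-cycle may contribute only to the $\ell=2$ bucket.

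The paper sidesteps this by choosing a different auxiliary digraph. It works on $V(G)$ and puts an arc $v\to u$ whenever there are at least $\gamma^2 n/2$ vertices $w$ with $vw$ red and $wu$ blue. Thus every arc already encodes a red--blue path of length two, so a directed cycle of length $\ell\ge 2$ unfolds to a colour-alternating closed walk of length $2\ell\ge 4$; the ``many intermediate $w$'' condition then lets one choose the midpoints greedily, avoiding all previously used vertices, to obtain at least $(\gamma/2)^{2\ell}n^\ell$ genuine colour-alternating $C_{2\ell}$'s from each directed $C_\ell$. Your state-space construction can be rescued along the same lines by passing to its square (arcs for directed paths of length two), which essentially reproduces the paper's digraph; alternatively one could first split off the case where $G$ has $\Omega(n^2)$ double edges and handle that directly via a K\H{o}v\'ari--S\'os--Tur\'an count of alternating $4$-cycles.
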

\begin{proof}
Let $k=8/\gamma^2 \log (8/\gamma^2)$ so that $\gamma^2/4 \ge \log (2k)/(k-1).$ We set $L=2k,$ $K=8k/\gamma^2$ and $c=(\gamma/2)^{2k}/(4k^{k+1}).$ 
We build a digraph $D$ on the same vertex set as $G$ by placing an edge from $v$ to $u$ if and only if there are at least $\gamma^2n/2$ vertices $w$ such that $vw$ is red and $wu$ is blue. 

Let us first show that every vertex of $D$ has out-degree at least $\gamma^2n/4.$ There are at least $\gamma n$ red neighbours of $v$ and each has $\gamma n$ blue neighbours so there are at least $\gamma^2n^2$ red-blue paths of length $2$ starting at $v.$ Let us assume that there are less than $\gamma^2n/2$ vertices $u$ such that there are at least $\gamma^2n/2$ vertices $w$ such that $vw$ is red and $wu$ is blue. 
In this case there are less than $\gamma^2n/2 \cdot n+ n \cdot \gamma^2n/2$ red-blue paths starting at $v$ which is a contradiction. Note that we allowed $u=v$ in the above consideration so we deduce that minimum out-degree in $D$ is at least $\gamma^2n/2-1\ge \gamma^2n/4.$ 
The previous lemma implies that there is some $\ell \le k$ such that $D$ contains at least $n^{\ell}/(2k^{\ell+1})$ copies of $C_\ell.$ 

For any such cycle by replacing each directed edge by a red-blue path of $G$ between its endpoints, ensuring we don't reuse a vertex, we obtain at least $(\gamma^2n/2-\ell)(\gamma^2n/2-\ell-1)\cdots(\gamma^2n/2-2\ell+1)\ge (\gamma/2)^{2\ell}n^\ell$ 
colour-alternating $C_{2\ell}$'s in $G$. Noticing that each such $C_{2\ell}$ may arise in at most $2$ different ways from a directed $C_{\ell}$ of $D$ we deduce that there are at least $n^{\ell}/(2k^{\ell+1})\cdot(\gamma/2)^{2\ell}n^\ell/2\ge c(\gamma)n^{2\ell}$ colour-alternating $C_{2\ell}$'s in $G$.
\end{proof}

The reason for formulating the above lemma is that we can deduce the existence of the blow-up of a cycle from the existence of many copies of this cycle using the hypergraph version of the celebrated K\H{o}v\'ari-S\'os-Tur\'an theorem proved by Erd\H{o}s in \cite{kst}:

\begin{thm}\label{thm:kst}
Let $\ell,t \in \mathbb{N}$. There exists $C=C(\ell,t)$ such that any $\ell$-graph on $n$ vertices with at least $Cn^{\ell-1/t^\ell}$ edges contains $K^{(\ell)}(t)$, the complete $\ell$-partite hypergraph with parts of size $t,$ as a subgraph. 
\end{thm}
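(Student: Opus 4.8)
I would prove \Cref{thm:kst} (the hypergraph K\H{o}v\'ari--S\'os--Tur\'an theorem, due to Erd\H{o}s) by induction on $\ell$, proving along the way the stronger \emph{supersaturation} statement: there are $C_\ell=C_\ell(t)$, $c_\ell=c_\ell(t)>0$ and $N_0=N_0(\ell,t)$ such that every $\ell$-graph $F$ on $N\ge N_0$ vertices with $e(F)\ge C_\ell N^{\ell-1/t^\ell}$ contains at least $c_\ell\,e(F)^{t^\ell}\,N^{\ell t(1-t^{\ell-1})}$ copies of $K^{(\ell)}(t)$. (The exponents are normalised so that this is, up to the constant, the number of copies in a random $\ell$-graph of the same edge density; at the threshold it reads $\gtrsim N^{\ell t-1}$.) The strengthening is essential: a bare induction producing a single copy of $K^{(\ell-1)}(t)$ per link is useless, since one needs the \emph{same} copy of $K^{(\ell-1)}(t)$ to reappear in the links of $t$ different vertices, which only follows if each link contains \emph{many} copies. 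The base case $\ell=1$ is trivial (a $1$-graph with $e(F)$ singleton edges has $\binom{e(F)}{t}\ge c_1e(F)^{t}$ copies of $K^{(1)}(t)$, and $e(F)\ge C_1 N^{1-1/t}\ge t$ if $C_1\ge t$), and small $N$ (resp.\ small $n$ in the theorem) is handled by taking $C$ large enough that the hypothesis becomes vacuous.

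For the inductive step, fix $\ell\ge2$ and an $\ell$-graph $H$ on $n$ vertices with $m:=e(H)\ge C_\ell n^{\ell-1/t^\ell}$; we may assume $t\ge2$ since $K^{(\ell)}(1)$ is a single edge. For $v\in V(H)$ let the link $H_v$ be the $(\ell-1)$-graph on $V(H)\setminus\{v\}$ with edge set $\{e:e\cup\{v\}\in H\}$, so $\sum_v e(H_v)=\ell m$. Delete every vertex whose link has fewer than $\theta:=C_{\ell-1}(n-1)^{(\ell-1)-1/t^{\ell-1}}$ edges. Since $1/t^\ell<1/t^{\ell-1}$, for $n$ large $n\theta=O(n^{\ell-1/t^{\ell-1}})$ is much smaller than $\ell m\ge\ell C_\ell n^{\ell-1/t^\ell}$, so the at most $n$ surviving links together carry at least $\tfrac12\ell m$ edges. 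Apply the inductive supersaturation bound to each surviving link to get $\ge c_{\ell-1}e(H_v)^{t^{\ell-1}}(n-1)^{(\ell-1)t(1-t^{\ell-2})}$ copies of $K^{(\ell-1)}(t)$ inside it, and sum over surviving $v$. Lower-bounding $\sum_v e(H_v)^{t^{\ell-1}}$ by Jensen's inequality applied to the convex map $x\mapsto x^{t^{\ell-1}}$ (at most $n$ summands, summing to $\ge\tfrac12\ell m$), we find that the number of \emph{flags} --- pairs $(v,\mathcal{K})$ with $\mathcal{K}$ a copy of $K^{(\ell-1)}(t)$ inside $H_v$ --- is at least $\Omega\!\left(m^{t^{\ell-1}}n^{(\ell-1)t+1-\ell t^{\ell-1}}\right)=\Omega\!\left(n^{(\ell-1)t+1-1/t}\right)$ after plugging in $m\ge C_\ell n^{\ell-1/t^\ell}$.

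Now count flags by bases. A \emph{base} is a copy of $K^{(\ell-1)}(t)$ on $V(H)$, and there are at most $n^{t(\ell-1)}$ of them, so by averaging some base $\mathcal{K}$ lies in the links of $\Omega(n^{1-1/t})$ vertices; as $t\ge2$ this is $\ge t$ once $n$ is large, yielding distinct vertices $v_1,\dots,v_t$ (automatically outside $V(\mathcal{K})$) with $\mathcal{K}\subseteq H_{v_i}$ for all $i$. Then $V(\mathcal{K})\cup\{v_1,\dots,v_t\}$ spans a $K^{(\ell)}(t)$: take the $\ell-1$ parts of $\mathcal{K}$ together with $\{v_1,\dots,v_t\}$, and note a transversal of these $\ell$ parts has the form $e\cup\{v_i\}$ with $e$ an edge of $\mathcal{K}$, hence lies in $H$. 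To push the induction through, set $c(\mathcal{K})=|\{v:\mathcal{K}\subseteq H_v\}|$; each copy of $K^{(\ell)}(t)$ arises, from each of the $\ell$ choices of which part is the ``apex'', via a $t$-subset of the $c(\mathcal{K})$ apexes of the corresponding base, so $H$ has at least $\tfrac1\ell\sum_{\mathcal{K}}\binom{c(\mathcal{K})}{t}$ copies of $K^{(\ell)}(t)$. Discarding the (lower-order) bases with $c(\mathcal{K})<2t$ and then applying Jensen to $x\mapsto x^{t}$ with $\sum_{\mathcal{K}}c(\mathcal{K})=\#\text{flags}$ over $\le n^{t(\ell-1)}$ bases turns the flag bound into $\Omega\!\left(m^{t^\ell}n^{\ell t(1-t^{\ell-1})}\right)$, which is exactly the $\ell$-level supersaturation bound; a short computation confirms the exponents match.

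The main obstacle is purely the bookkeeping required to see that all exponents close, and it rests on three checks. First, at the link-deletion step it is precisely the positivity of $1/t^{\ell-1}-1/t^\ell$ that lets essentially all edge mass survive --- this is also where the wasteful exponent $\ell-1/t^\ell$ (rather than Erd\H{o}s's sharper $\ell-1/t^{\ell-1}$) buys slack. Second, the power-mean step on the at-most-$n$ link sizes must recover enough of a power of $n$ that, after dividing by the $\le n^{t(\ell-1)}$ possible bases, the pigeonhole still produces $\ge t$ apexes --- it does, with a factor $n^{1-1/t}>1$ to spare. Third, the closing convexity step must reproduce the $\ell$-level supersaturation exponent from the $(\ell-1)$-level one, which it does on the nose. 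Everything else is routine: keeping $C_\ell,c_\ell$ functions of $\ell$ and $t$ only, and choosing $N_0(\ell,t)$ --- hence $C=C(\ell,t)$ in the theorem --- large enough to absorb the finitely many ``$n$ large'' requirements.
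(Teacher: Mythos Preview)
The paper does not prove \Cref{thm:kst}: it is quoted as the hypergraph K\H{o}v\'ari--S\'os--Tur\'an theorem and attributed to Erd\H{o}s with a citation, and is then used as a black box inside \Cref{lem:cycle-blow-up}. Hence there is no ``paper's own proof'' to compare against.

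Your argument is correct and is, in essence, the classical Erd\H{o}s proof: induct on the uniformity $\ell$ via links, carry a supersaturation bound through the induction (this is the right strengthening --- without it the pigeonhole over bases would not produce $t$ common apexes), and close each step with convexity/Jensen on $x\mapsto x^{t^{\ell-1}}$ and $x\mapsto x^{t}$. The three checks you single out are exactly the places the exponents must be verified, and they do close: the deletion step uses $1/t^{\ell-1}>1/t^{\ell}$ to keep most of the edge mass; averaging flags over at most $n^{t(\ell-1)}$ bases leaves $\Omega(n^{1-1/t})\ge t$ apexes for $t\ge 2$; and the final Jensen step recovers $\Omega\!\left(m^{t^{\ell}}n^{\ell t(1-t^{\ell-1})}\right)$ on the nose. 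One cosmetic remark: Erd\H{o}s actually obtains the slightly stronger exponent $\ell-1/t^{\ell-1}$, but the weaker $\ell-1/t^{\ell}$ stated in the paper is all that is used downstream, and your proof delivers it with room to spare.
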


We are now ready to find our desired blow-up.
\begin{lem}\label{lem:cycle-blow-up}
For every $\gamma \in (0,1)$ and $t\in \mathbb{N}$, there exist positive integers $L = L(\gamma)$ and  $K = K(\gamma,t)$ such that, if $G$ is a $2$-edge-coloured graph on $n \geq K$ vertices satisfying $\delta_1(G), \delta_2(G) \geq \gamma n$, then $G$ contains $\mathcal{C}(t6^L)$ where $\mathcal{C}$ is a colour-alternating cycle with $|V(\mathcal{C})| \leq L.$
\end{lem}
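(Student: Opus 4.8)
The plan is to feed \Cref{lem:many-cycles}, which supplies many copies of a short colour-alternating cycle, into the hypergraph Kővári--Sós--Turán theorem (\Cref{thm:kst}), which converts ``many copies of a fixed cycle'' into ``a blow-up of that cycle''. The subtlety is that applying \Cref{thm:kst} directly to the $\ell$-uniform hypergraph of \emph{vertex sets} of these cycles would only return $\ell$ parts such that every transversal spans \emph{some} colour-alternating $\ell$-cycle, whereas a blow-up needs all transversals to realise the cycle in the \emph{same} cyclic order and with the \emph{same} colour pattern. Handling this mismatch — without appealing to a product Ramsey theorem — is the main point.

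First I would apply \Cref{lem:many-cycles} with $\gamma$ to get $L=L(\gamma)$ and an even $\ell$ with $4\le \ell\le L$ such that $G$ contains at least $cn^\ell$ colour-alternating $\ell$-cycles, $c=c(\gamma)>0$; these will be the $L$ and $\ell\le L$ of the conclusion. Then I would run a partite trick: colour the vertices of $G$ with $\ell$ colours uniformly and independently, with colour classes $A_1,\dots,A_\ell$. A fixed colour-alternating $\ell$-cycle has all vertices in distinct classes with probability $\ell!/\ell^\ell\ge e^{-L}$, so by linearity of expectation we may fix a colouring making at least $e^{-L}cn^\ell$ of the cycles \emph{rainbow} (meeting every class exactly once). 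Each rainbow cycle has a \emph{type}: the Hamilton cycle it induces on the index set $\{1,\dots,\ell\}$ together with which of its edges are red; there are at most $(\ell-1)!$ types, so by pigeonhole at least $c_1n^\ell$ rainbow cycles share one type, with $c_1=c_1(\gamma)>0$, and after relabelling the classes we may assume this type is the ``standard'' one, i.e.\ these cycles are exactly the tuples $(w_1,\dots,w_\ell)\in A_1\times\cdots\times A_\ell$ with $w_jw_{j+1}\in E(G)$ coloured red iff $j$ is odd (indices taken cyclically).

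Next let $\mathcal{H}$ be the $\ell$-uniform, $\ell$-partite hypergraph on $A_1\cup\cdots\cup A_\ell$ whose edges are the vertex sets of these standard-type rainbow cycles, so $e(\mathcal{H})\ge c_1n^\ell$. Since $\ell-1/(t6^L)^\ell<\ell$, this beats $C(\ell,t6^L)\,n^{\ell-1/(t6^L)^\ell}$ once $n\ge K(\gamma,t)$ for a suitable $K$ (chosen to work for every $\ell\in\{4,\dots,L\}$), so \Cref{thm:kst} gives $K^{(\ell)}(t6^L)\subseteq\mathcal{H}$ with parts $P_1,\dots,P_\ell$. A short argument shows any $K^{(\ell)}$ inside an $\ell$-partite $\ell$-uniform hypergraph is automatically aligned with the parts: varying a single coordinate of a transversal and using disjointness of the $A_i$ forces each $P_j\subseteq A_{\pi(j)}$ for one fixed permutation $\pi$; relabelling, $P_j\subseteq A_j$. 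Now every transversal of $P_1\times\cdots\times P_\ell$ is an edge of $\mathcal{H}$ with exactly one vertex in each $A_j$, hence is itself a standard-type rainbow cycle; as this holds for \emph{all} transversals, between $P_j$ and $P_{j+1}$ every pair is an edge of $G$ coloured red iff $j$ is odd. That is precisely a copy of $\mathcal{C}(t6^L)$ where $\mathcal{C}$ is the colour-alternating cycle of length $\ell\le L$, which is what we wanted.

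The hard part is exactly the structural mismatch flagged above; it is dissolved by the three-step device of (i) a random $\ell$-colouring, which makes the host hypergraph partite, (ii) a pigeonhole over the boundedly many cyclic/colour types, and (iii) the automatic alignment of a complete $\ell$-partite $\ell$-uniform sub-hypergraph with the partition. Everything else — verifying the blow-up has no double edges, and choosing $K(\gamma,t)$ so the KST inequality holds uniformly over $4\le\ell\le L$ — is routine bookkeeping.
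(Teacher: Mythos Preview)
Your proof is correct and follows essentially the same route as the paper's: invoke \Cref{lem:many-cycles}, randomly partition $V(G)$ into $\ell$ classes, pigeonhole to fix the cyclic order and colour pattern of the surviving cycles, and then apply \Cref{thm:kst} to the resulting $\ell$-partite $\ell$-uniform hypergraph. The paper is terser---it folds your ``rainbow + type pigeonhole'' into a single step (counting only cycles with $v_i\in V_i$, then halving for the colour pattern) and simply asserts that the output of \Cref{thm:kst} ``corresponds to a desired $\mathcal{C}(t6^L)$''---whereas you spell out the alignment of the $K^{(\ell)}$ parts with the partition explicitly; otherwise the arguments coincide.
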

\begin{proof}
Let $L=L(\gamma),c=c(\gamma),K \ge K(\gamma)$ be parameters of \Cref{lem:many-cycles} so that we can find $cn^\ell$ copies of a colour-alternating cycle of length $4 \le \ell \le L.$ Let $C=C(L,t6^L)\ge C(\ell,t6^L)$ be the parameter given by \Cref{thm:kst}. By assigning each vertex of $V(G)$ into one of $\ell$ parts uniformly at random we can find a partition of $V(G)$ into $V_1,\ldots,V_\ell$ such that there are $cn^\ell/\ell^\ell$ colour-alternating cycles $v_1\ldots v_\ell$ with $v_i \in V_i$. 
We also know that at least half of these cycles always use edges of the same colour between all $V_i,V_{i+1}.$ We now build an $\ell$-graph $H$ on the same vertex set as $G$ whose edges correspond to sets of vertices of such colour-alternating cycles. So we know $H$ has at least $\frac{c}{2\ell^\ell}n^\ell  \ge Cn^{\ell-1/(t^\ell\cdot 6^{\ell L})}$ many edges, by taking $K$ large enough, depending on $t,L.$ So \Cref{thm:kst} implies that $H$ contains $K^{(\ell)}(t6^L)$ as a subgraph, which corresponds to a desired $\mathcal{C}(t6^L).$
\end{proof}

\subsection{Ordered graphs}
In our arguments it will not be enough to just find a blow-up of a colour-alternating cycle as in the previous subsection; we will also care about the ``order'' in which the cycles are embedded. In this section we give some notation about ordered graphs and a result which we will need later.

An \emph{ordered graph} is a graph together with a total order of its vertex set.
Here, whenever $G$ is a graph on an indexed vertex set $V(G) = \{v_1, \dots, v_n\}$, we assume that $G$ is ordered by $v_i < v_j \iff i < j$. An \emph{ordered subgraph} of an ordered graph $G$ is a subgraph of $G$ that is endowed with the order that is induced by $G$ and if not stated otherwise, we assume that subgraphs of $G$ are always endowed with that order. For us, two vertices $u < v$ of an ordered graph $G$ are called \emph{neighbouring}, if the set of vertices between $u$ and $v$, that is $\{x \in V(G) | u \leq x \leq v\}$, is either just $\{u,v\}$ or the whole vertex set $V(G)$.

Given an ordered graph $G$ we say a blow-up $H=G(k)$ of $G$ is \textit{ordered consistently} if for any $x,y \in V(H)$ which belong to parts of the blow-up coming from vertices $u,v\in G$ respectively we have $x <_H y$ iff $u <_G v.$

\begin{lem}\label{lem:ordered}
Let $t,L\in \mathbb{N},$ $H$ be a graph on $L$ vertices and $H(t2^L)\subseteq G$ for an ordered graph $G$. There exists an ordering of $H$ for which the consistently ordered $H(t)$ is an ordered subgraph of $G.$ 
\end{lem}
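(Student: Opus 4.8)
The plan is to reduce the statement to a purely order-theoretic fact about disjoint sets on a line, and then to establish that fact by a recursive peeling argument.

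Since $H(t2^L)\subseteq G$, there are pairwise disjoint vertex sets $(W_v)_{v\in V(H)}$ in $V(G)$, each of size $t2^L$, such that for every edge $uv\in E(H)$ the graph $G$ contains every edge between $W_u$ and $W_v$. As $G$ is ordered, each $W_v$ carries the induced total order. I claim it suffices to find, for every $v$, a subset $W_v'\subseteq W_v$ with $|W_v'|=t$ such that the $W_v'$ are pairwise \emph{non-interleaving}: for any $u\neq v$, either every element of $W_u'$ precedes every element of $W_v'$ in $G$, or the other way around. Granting this, the non-interleaving property linearly orders the sets $W_v'$, hence orders $V(H)$; order $H$ by this order and map the $t$ blow-up vertices of the part of $v$ onto $W_v'$, ordering them inside the part as they sit in $G$. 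Every edge of $H(t)$ runs between parts of adjacent vertices of $H$, hence is present in $G$; and this embedding preserves the order — inside a part by construction, between parts by non-interleaving — so it realises the consistently ordered $H(t)$ as an ordered subgraph of $G$, with the chosen ordering of $H$.

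The core is the following claim, proved by induction on $m$: if $U_1,\dots,U_m$ are pairwise disjoint subsets of a totally ordered ground set and $|U_i|\ge 2^{m-1}r$ for all $i$, then there are subsets $U_i'\subseteq U_i$ with $|U_i'|\ge r$ and an ordering of them into a chain $U'_{\pi(1)}<\dots<U'_{\pi(m)}$, where $X<Y$ means $\max X<\min Y$. The case $m=1$ is immediate. For $m\ge 2$, put $q:=2^{m-2}r$ and scan $\bigcup_i U_i$ from the smallest element upward; let $i_0$ be the index of the first set to accumulate $q$ of its elements among the smallest ones seen so far, and let $x$ be the $q$-th smallest element of $U_{i_0}$. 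Let $U'_{i_0}$ be the $q$ smallest elements of $U_{i_0}$ (all $\le x$, and $q\ge r$ as $m\ge 2$). For $i\neq i_0$, the choice of $i_0$ forces $U_i$ to have at most $q-1$ elements that are $\le x$, so the set $V_i$ of its elements exceeding $x$ satisfies $|V_i|\ge 2^{m-1}r-(q-1)>2^{m-2}r=2^{(m-1)-1}r$. Apply the inductive hypothesis to $\{V_i:i\neq i_0\}$; everything it returns lies strictly above $x$, so prepending $i_0$ with $U'_{i_0}$ (whose maximum is $\le x$) extends the chain to all $m$ sets, proving the claim. Finally apply it with $m=L$, $r=t$, and $U_v=W_v$ — the hypothesis $t2^L\ge 2^{L-1}t$ holds — and shrink every returned set to exactly $t$ elements, which keeps the sets non-interleaving; this yields the $W_v'$ required above.

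The only real obstacle I foresee is getting the inductive step of the claim right, i.e.\ choosing the threshold $x$ so little is lost: peeling off the set that ``fills up first'' as the leading block of the chain is what caps the loss at a single factor of $2$ per set, so that the $2^L$ — in fact $2^{L-1}$ — copies supplied by the hypothesis suffice, whereas a cruder cut (e.g.\ at a global median) would lose far more. Everything else — extracting the sets $W_v$ from the blow-up, checking that non-interleaving subsets glue into an ordered copy of $H(t)$, and the shrinking step — is routine bookkeeping.
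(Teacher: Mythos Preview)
Your proof is correct and takes essentially the same approach as the paper's: an induction on the number of parts that peels off one ``leftmost'' part per step, losing a factor of two each time. The only difference is the peeling criterion --- the paper takes the part whose median is smallest and cuts every part at its own median, whereas you scan from the left and peel the first part to accumulate half its quota --- but these are interchangeable implementations of the same idea.
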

\begin{proof}
We prove the result by induction on $L,$ where the $L=1$ case is immediate. Let $\{V_1, \dots, V_L\}$ be the clusters of vertices of $H(t2^L),$ so $|V_i|=t2^L.$ Let $w_1, \dots, w_p$ be the median vertices of the sets $V_1, \dots ,V_p$ with respect to the ordering of $H(t2^L)$ induced by $G$ and assume without loss of generality that $w_1$ is the smallest of them. We now throw away all vertices of $V_1$ that are larger than $w_1$ and all vertices of $V_i$ that are smaller than $w_i$ for $i \geq 2$. This leaves us with $L$ sets $\{W_1, \dots, W_L\}$ of size $\ceil{|V_i|/2}=t2^{L-1}$ with the property that $v_1 \in W_1, v_i \in W_i \implies v_1 <_G w_1<_G w_i<_G v_i$ for all $i \geq 2$. If $v\in H$ corresponds to $V_1$ and we denote $H'=H-v$ then $\mathcal{W} = \{W_2, \dots, W_L\}$ spans $H'(t2^{L-1})\subseteq G\setminus V_1$. By the induction hypothesis we can find a consistently ordered $H'(t)$ as an ordered subgraph of $G\setminus V_1$ which together with any subset of size $t$ of $W_1$ gives the desired consistently ordered $H(t)$ in $G$.
\end{proof}

\section{Proof of \Cref{thm:main}}\label{sec:main}

\subsection{Constructing an auxiliary graph}\label{constrA}

Throughout the whole section, let $G$ be a Hamiltonian graph on $n$ vertices. First of all, let us fix a Hamilton cycle $H$ of $G$ and name the vertices of $G$ such that $H = v_1 v_2 \dots v_n v_1$. We assume that $G$ is ordered according to this labelling. Also, let us denote the edges of $H$ by $e_1, e_2, \dots , e_n$ such that $e_1 = v_1 v_2, \dots , e_n = v_n v_1$. In all our following statements, we will identify $v_{n+1}$ and $v_1$, and more generally $v_i$ and $v_j$, as well as $e_i$ and $e_j$, if $i$ and $j$ are congruent modulo $n$. Furthermore, since we can always picture $G$ as a large cycle with some edges inside it, we call all the edges that are not part of $H$, the \emph{inner edges} of $G$.

Our goal is to find a $2$-factor with a fixed number of cycles in $G$. Note that, if $G$ is dense, it is not hard to find a large collection of vertex-disjoint cycles in $G$. The difficulty lies in the fact that we want this collection to be spanning while still controlling the exact number of cycles. Naturally, we have to rely on the Hamiltonian structure of $G$ to give us such a spanning collection of cycles. Indeed, when building these cycles we will try to use large parts of the Hamilton cycle $H$ as a whole and connect them correctly using some inner edges of $G$. It is convenient for our approach to construct an auxiliary graph $A$ out of $G$, that captures the information we need about the inner edges of $G$. 

\begin{defn} \label{defn:aux}

Given the setup above, we define the \emph{auxiliary graph} $A = A(G,H)$ as the following ordered, $2$-edge-coloured $n$-vertex graph:

\begin{enumerate}
    \item Every vertex of $A$ corresponds to exactly one edge of $H$, thus we have $V(A) = \{e_1, \dots , e_n\}$ and we order the vertices of $A$ according to this labelling;
    \item two vertices $e_i = v_i v_{i+1}$ and $e_j = v_j v_{j+1}$ of $A$ are connected with a red edge if there is an inner edge of $G$ connecting $v_{i+1}$ and $v_{j+1}$;
    \item similarly, the vertices $e_i$ and $e_j$ of $A$ are connected with a blue edge if there is an inner edge of $G$ connecting $v_i$ and $v_j$.
\end{enumerate}

\end{defn}

 Throughout this section, let $A = A(G,H)$ for our fixed $G$ and $H$. Note that, by the above definition, every edge $\ell \in E(A)$ corresponds to a unique inner edge $e$ of $G$. In the following, we denote this edge by $e(\ell) \in E(G)$. To be precise, if $\ell = e_i e_j$, then $e(\ell) \coloneqq v_{i+1} v_{j+1}$ if $\ell$ is a red edge and $e(\ell) \coloneqq v_i v_j$ if $\ell$ is a blue edge. Conversely, every inner edge of $G$ corresponds to exactly one red edge and to one blue edge of $A$. This leads to the following observation:

\begin{obs} \label{deltaA}
For $i \in \{1, \dots, n\}$, we have $d^{A}_1(e_i) = d^{G}(v_{i+1})-2$ and $d^{A}_2(e_i) = d^{G}(v_i)-2$. In particular, we have $\delta_1(A) = \delta_2(A) = \delta(G) - 2$.
\end{obs}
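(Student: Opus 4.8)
The plan is to prove the two degree identities by exhibiting, for each fixed index $i$, an explicit bijection between the monochromatic neighbourhoods of $e_i$ in $A$ and the sets of inner edges of $G$ at the corresponding vertex of $H$; the statement about minimum degrees then follows by letting $i$ range over all of $\{1,\dots,n\}$.

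First fix $i\in\{1,\dots,n\}$ and count the red degree of $e_i$. By item (2) of \Cref{defn:aux}, a vertex $e_j$ is a red neighbour of $e_i$ precisely when $G$ contains an inner edge joining $v_{i+1}$ and $v_{j+1}$. As $j$ runs over the indices with $e_j\neq e_i$, the vertex $v_{j+1}$ runs over $V(G)\setminus\{v_{i+1}\}$ and distinct $j$ give distinct $v_{j+1}$, so the assignment $e_j\mapsto e(e_ie_j)=v_{i+1}v_{j+1}$ is a bijection from the red neighbourhood of $e_i$ onto the set of inner edges of $G$ incident to $v_{i+1}$ (this is just the edge-to-inner-edge correspondence already recorded after \Cref{defn:aux}, restricted to the red edges at $e_i$; injectivity uses that the $v_{j+1}$ are distinct and surjectivity is the definition of $A$). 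Now $v_{i+1}$ has $d^G(v_{i+1})$ edges in $G$, of which exactly two, namely $e_i=v_iv_{i+1}$ and $e_{i+1}=v_{i+1}v_{i+2}$, lie on $H$ and hence are not inner; so $v_{i+1}$ is incident to exactly $d^G(v_{i+1})-2$ inner edges, whence $d^A_1(e_i)=d^G(v_{i+1})-2$.

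The computation of $d^A_2(e_i)$ is identical with item (3) in place of item (2): $e_j$ is a blue neighbour of $e_i$ iff $G$ has an inner edge joining $v_i$ and $v_j$, and $e_j\mapsto v_iv_j$ is a bijection from the blue neighbourhood of $e_i$ onto the inner edges of $G$ at $v_i$. Of the $d^G(v_i)$ edges of $G$ at $v_i$, the two Hamilton edges $e_{i-1}=v_{i-1}v_i$ and $e_i=v_iv_{i+1}$ are not inner, so $d^A_2(e_i)=d^G(v_i)-2$.

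Finally, since $i\mapsto i+1$ is a permutation of $\{1,\dots,n\}$, as $i$ runs over all indices both $v_i$ and $v_{i+1}$ run over all of $V(G)$; hence $\delta_1(A)=\min_i\bigl(d^G(v_{i+1})-2\bigr)=\delta(G)-2$ and likewise $\delta_2(A)=\delta(G)-2$. There is no genuine obstacle here; the only point requiring care is the index bookkeeping — correctly identifying the two Hamilton edges at each vertex so that the ``$-2$'' is counted exactly once, and verifying that the neighbour-to-inner-edge maps are honestly bijective.
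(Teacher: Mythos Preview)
Your proof is correct and is exactly the natural verification the paper has in mind; the paper itself states this as an observation without proof, since it is immediate from \Cref{defn:aux} and the remark following it that every inner edge of $G$ corresponds to exactly one red and one blue edge of $A$. Your explicit bijection and the ``$-2$'' accounting for the two Hamilton edges at each vertex are precisely the details being suppressed.
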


In \Cref{ExA} we give an example of a Hamiltonian graph and its corresponding auxiliary graph.

\begin{figure}[ht]
    \caption{Let us call the left graph $G$ and fix its Hamilton cycle $H = v_1 \dots v_8 v_1$. Then the graph on the right is the auxiliary graph $A(G,H)$.}
    \includegraphics[scale = 0.8]{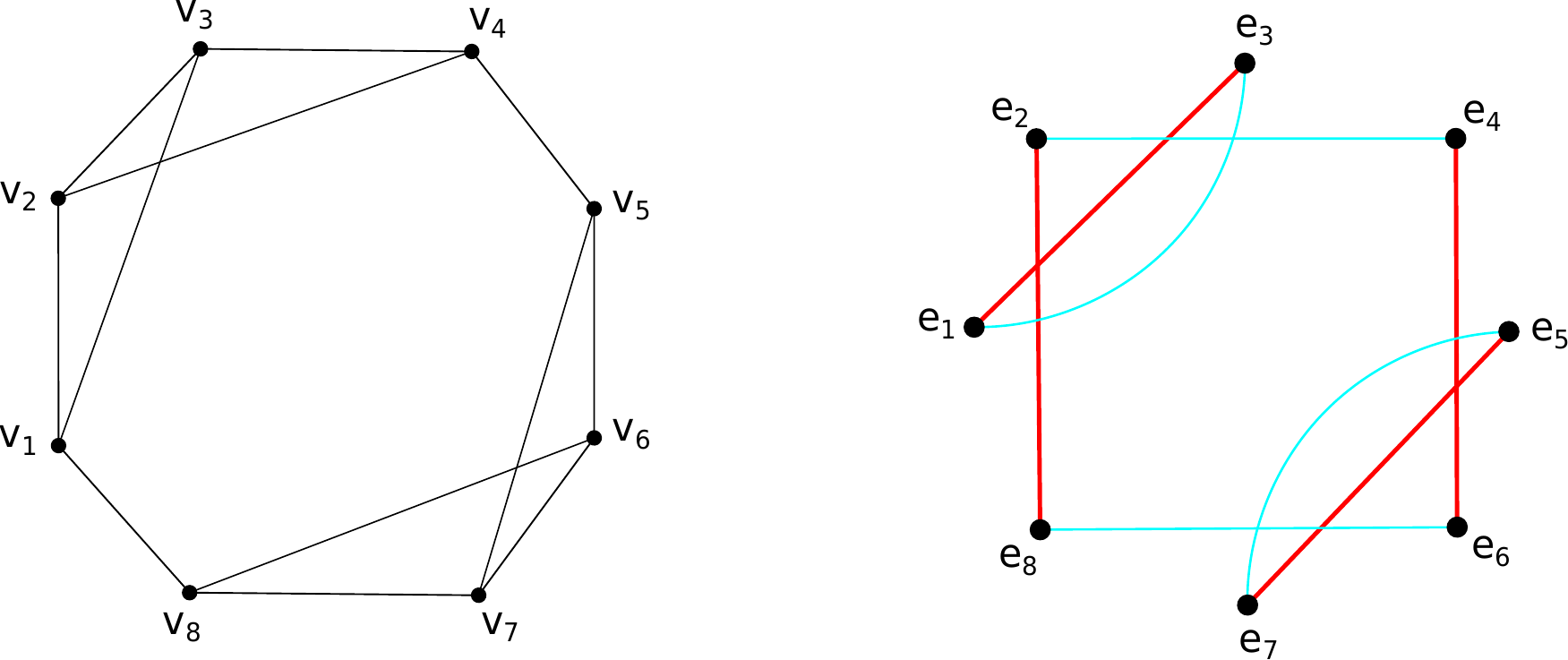}
    \label{ExA}
\end{figure}

The motivation for defining $A$ just as above is given by the fact that 2-regular (possibly non-spanning!) subgraphs $S \subseteq A$ satisfying some extra conditions naturally correspond to a $2$-factor in $G$. Recall that in our setting, two vertices $e_i$ and $e_j$ of $A$ are neighbouring if $|i-j| \equiv 1$ (modulo $n$). Let us make the following definition:

\begin{defn} \label{defn:correspondence}
Given the same setup as above and a subgraph $S \subseteq A$ that is a union of vertex-disjoint colour-alternating cycles without neighbouring vertices (i.e.\ if $e_i \in V(S)$ then $e_{i-1},e_{i+1}\notin V(S)$), we define its corresponding subgraph $F(S) \subseteq G$ as follows:

\begin{enumerate}
    \item $V(F(S)) \coloneqq V(G)$;
    \item the edges of $F(S)$ are all the edges of $H$ except for those that correspond to vertices of $S$. Additionally, for every edge $\ell \in E(S)$, let the corresponding inner edge $e(\ell)$ be an edge of $F(S)$ too. That is, $E(F(S)) \coloneqq \left(\{e_1, \ldots, e_n\} \setminus V(S)\right) \cup \{e(\ell) \mid \ell \in E(S)\}$.
\end{enumerate}
\end{defn}

\begin{lem} \label{correspondence}
If $S \subseteq A$ is a union of vertex-disjoint colour-alternating cycles without neighbouring vertices, then $F(S) \subseteq G$ is a $2$-factor. 
\end{lem}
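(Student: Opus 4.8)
The plan is to verify directly that $F(S)$ is a spanning $2$-regular subgraph of $G$. Since $V(F(S)) = V(G)$ by definition, the only thing to check is that every vertex $v_m$ has degree exactly $2$ in $F(S)$. The natural strategy is to track, for each vertex $v_m$, which of its two Hamilton-cycle edges $e_{m-1} = v_{m-1}v_m$ and $e_m = v_m v_{m+1}$ survive in $F(S)$, and which inner edges at $v_m$ are added because they arise as $e(\ell)$ for some $\ell \in E(S)$. First I would observe the key bookkeeping fact built into \Cref{defn:aux}: the edges of $A$ incident to a vertex $e_i$ split into red edges, each of which encodes an inner edge of $G$ at $v_{i+1}$, and blue edges, each encoding an inner edge of $G$ at $v_i$. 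So an inner edge incident to $v_m$ shows up in $A$ ``at $e_m$ as a blue edge'' or ``at $e_{m-1}$ as a red edge''.

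Next I would do the case analysis at a fixed vertex $v_m$, driven by which of $e_{m-1}, e_m$ lie in $V(S)$; here the no-neighbouring-vertices hypothesis is essential because it forbids the case $e_{m-1}, e_m \in V(S)$ simultaneously. Case (a): neither $e_{m-1}$ nor $e_m$ is in $V(S)$. Then both Hamilton edges at $v_m$ stay in $F(S)$, contributing degree $2$; I must then argue no inner edge at $v_m$ is added. An inner edge $v_m v_j$ would have to equal $e(\ell)$ for some $\ell \in E(S)$; by the description of $e(\cdot)$, such an $\ell$ is either a blue edge at $e_m$ or a red edge at $e_{m-1}$, hence incident to $e_m$ or $e_{m-1}$ in $S$ — impossible since neither is a vertex of $S$. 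Case (b), by symmetry just one subcase, say $e_m \in V(S)$ but $e_{m-1} \notin V(S)$ (the case $e_{m-1}\in V(S)$, $e_m\notin V(S)$ is identical with roles swapped). Then $e_m$ is removed from $H$, so $v_m$ gets one unit of degree from $e_{m-1}$. Since $S$ is a union of colour-alternating cycles, $e_m$ has exactly two incident edges in $S$, one red and one blue. The blue one is incident to $e_m$, so it encodes an inner edge at $v_m$. The red one — I need to check it also contributes an inner edge at $v_m$: a red edge $\ell = e_m e_j$ has $e(\ell) = v_{m+1}v_{j+1}$, which is an inner edge at $v_{m+1}$, not $v_m$. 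Here I need to be a bit more careful.

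Let me redo case (b) correctly: the two edges of $S$ at $e_m$ are one red and one blue. The blue edge $\ell_b = e_m e_j$ gives $e(\ell_b) = v_m v_j$, an inner edge at $v_m$ — good, one unit. The red edge $\ell_r = e_m e_i$ gives $e(\ell_r) = v_{m+1} v_{i+1}$, which is at $v_{m+1}$, not $v_m$. So from $e_m$'s membership in $S$ we only get one inner edge at $v_m$, plus the surviving $e_{m-1}$ — but I still need a second inner edge at $v_m$, and also need $e_{m-1}\notin V(S)$ to not cause an extra edge. The resolution: inner edges at $v_m$ also arise from red edges of $S$ incident to $e_{m-1}$, since a red edge $e_{m-1}e_j$ gives $e(\ell) = v_m v_{j+1}$. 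So I should instead organize the count as: the surviving Hamilton edges at $v_m$ are exactly $e_{m-1}$ (if $e_{m-1}\notin V(S)$) and $e_m$ (if $e_m\notin V(S)$); the added inner edges at $v_m$ are exactly the blue $S$-edges at $e_m$ together with the red $S$-edges at $e_{m-1}$. In case (a), $e_{m-1}, e_m \notin V(S)$ so they contribute nothing to $S$, giving $0$ inner edges and $2$ Hamilton edges. In case (b) with $e_m \in V(S), e_{m-1}\notin V(S)$: $e_{m-1}$ contributes no red $S$-edge (not in $S$), and $e_m \in V(S)$ contributes exactly one blue $S$-edge (colour-alternating cycle through $e_m$ uses one edge of each colour), so one inner edge at $v_m$ plus the one surviving Hamilton edge $e_{m-1}$, total $2$. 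I should also double-check that distinct edges of $S$ give distinct edges of $G$ (so there is no multiplicity collision), which follows from each inner edge of $G$ corresponding to exactly one red and one blue edge of $A$, and a colour-alternating cycle using each such edge at most once. Assembling these cases shows $d_{F(S)}(v_m) = 2$ for every $m$, so $F(S)$ is a $2$-factor.

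The main obstacle I anticipate is precisely the off-by-one indexing in the red/blue correspondence: keeping straight that a red edge at $e_i$ ``belongs'' to $v_{i+1}$ while a blue edge at $e_i$ ``belongs'' to $v_i$, so that the inner edges landing at a fixed $v_m$ come from two different vertices of $A$ (namely $e_m$ via blue and $e_{m-1}$ via red). Once that bookkeeping is set up cleanly — ideally by writing the incidence of $v_m$ in $F(S)$ as a disjoint union of an explicit Hamilton-edge part and an explicit $S$-edge part — the degree count falls out, with the no-neighbouring-vertices condition used exactly to rule out $e_{m-1}, e_m \in V(S)$ together (which would leave $v_m$ with no Hamilton edges and force a delicate argument that $S$ supplies exactly two inner edges there).
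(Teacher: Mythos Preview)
Your proposal is correct and follows essentially the same case analysis as the paper. The one simplification the paper makes: it first observes that $|E(F(S))| = n$ (since $S$ is a union of cycles, the number of Hamilton edges removed equals the number of inner edges added), so it only needs to show $\delta(F(S)) \ge 2$; this spares you the work in case~(a) of ruling out extra inner edges and the distinctness check you flag at the end.
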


In order to illustrate the above definitions, consider the Hamiltonian graph given in \Cref{ExA} and the subgraphs $S_1$ and $S_2$ of the corresponding auxiliary graph where $S_1$ is just the cycle $e_2 e_4 e_6 e_8 e_2$ and $S_2$ is the union of the cycles $e_1 e_3 e_1$ and $e_5 e_7 e_5$. Their corresponding $2$-factors $F(S_1)$ and $F(S_2)$ are shown as dashed in \Cref{ExCorr}. Note that they use the same inner edges of $G$ but still have different numbers of cycles.

\begin{figure}[ht]
    \caption{$2$-factors $F(S_1)$ and $F(S_2)$ used in the illustration above.}
    \includegraphics[scale = 0.8]{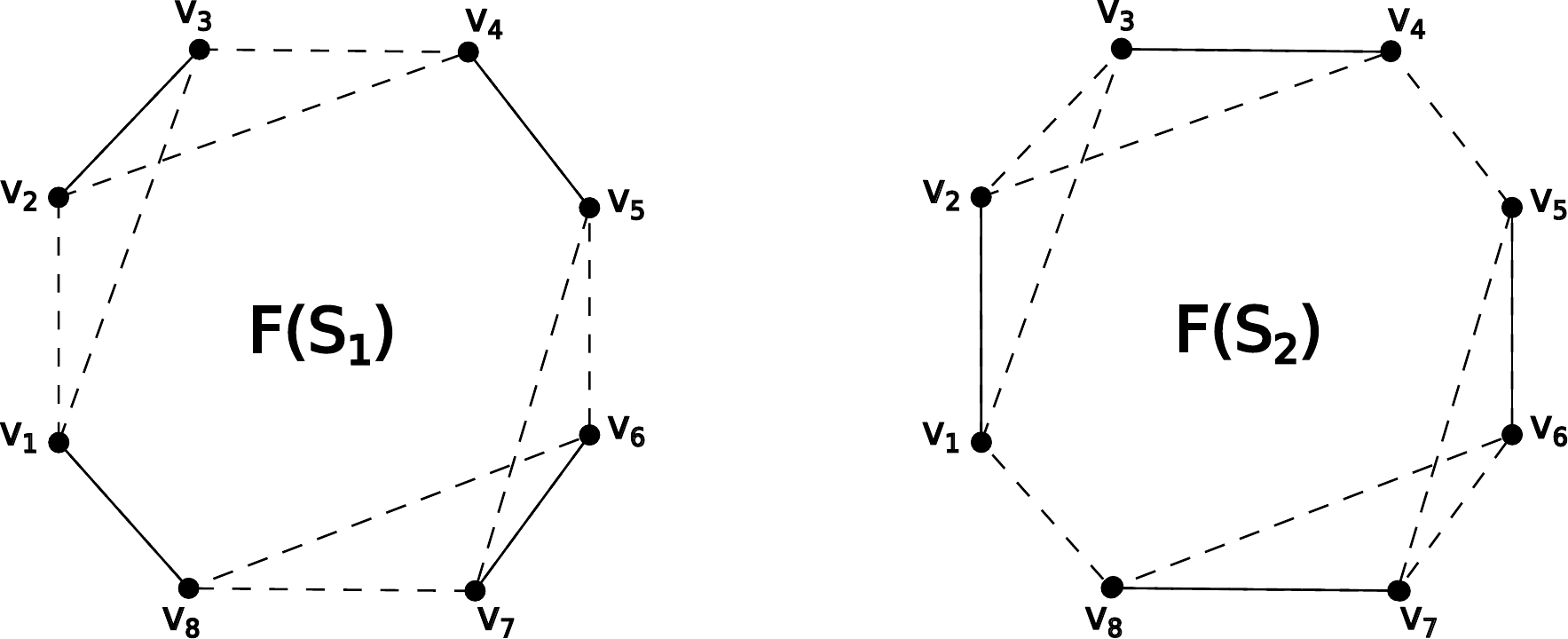}
    \label{ExCorr}
\end{figure}

\begin{proof}[ of \Cref{correspondence}]
Since $F \coloneqq F(S)$ consists of exactly $n$ edges, it suffices to show that $\delta(F)\ge 2$. Let $v_j$ be an arbitrary vertex of $F$. We distinguish two cases: If both edges $e_{j-1}, e_j \notin V(S)$, then $e_{j-1}, e_j \in E(F)$ and $v_j$ is incident to $e_{j-1}$ and $e_j$ in $F$. Else, exactly one of the edges $e_{j-1}$ and $e_j$ is a vertex of $S$ since $S$ contains no neighbouring vertices. In this case we use the fact that every vertex $e_i$ of $S$ is incident to a red edge $\ell_i$ and to a blue edge $\ell_i'$. Hence, by \Cref{defn:correspondence}, either $e_{j-1}\in S$ and $e_j \notin S$ in which case $v_j$ is incident to $e_j$ and $e(\ell_{j-1})$ in $F$ or $e_{j-1}\notin S$ and $e_j \in S$ in which case $v_j$ is incident to $e_{j-1}$ and $e(\ell_j')$ in $F$. In both cases these two edges are distinct as one of them is an inner edge of $G$ and the other one is not. 
\end{proof}

We note that $F(S)$ does not only depend on the structure of $S$ but also on the order in which $S$ is embedded within $A$. However, it is immediate that if $S$ is embedded in auxiliary graphs of two Hamiltonian graphs (possibly with different number of vertices) in the same order then $F(S)$ has the same number of cycles in both cases. 

\begin{obs}\label{obs:samecomps}
Let $A_1=A(G_1,H_1)$ and $A_2=A(G_2,H_2)$. Let $S_1$ and $S_2$ be disjoint unions of colour-alternating cycles without neighbouring vertices, which are isomorphic as coloured subgraphs of $A_1$ and $A_2$ whose corresponding vertices appear in the same order along $H_1$ and $H_2.$ Then $F(S_1)$ and $F(S_2)$ consist of the same number of cycles.
\end{obs}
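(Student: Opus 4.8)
The plan is to reduce everything to a purely combinatorial description of the cycle structure of $F(S)$ that depends only on the isomorphism type of $S$ together with the cyclic order in which its vertices sit among $e_1,\dots,e_n$. Concretely, I would trace the connected components of $F(S)$ as walks: starting from a vertex $v_j$ with $e_{j-1},e_j\notin V(S)$, one can follow the edges of $H$ until one reaches the first index $i$ with $e_i\in V(S)$; at that point $F(S)$ forces a jump along the inner edge $e(\ell)$ for the appropriate edge $\ell$ of $S$ incident to $e_i$ (red if we arrived "from the left'', blue otherwise, in the sense spelled out in the proof of \Cref{correspondence}), landing us at some other vertex, from which we again walk along $H$ until we hit the next vertex of $S$. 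The key point is that which vertex of $H$ we land on, and whether the next step continues "to the right'' along $H$, is determined entirely by the coloured-graph structure of $S$ (which edge of $S$ is incident to $e_i$ in which colour, and where its other endpoint lies) and by the cyclic order of $V(S)$ inside $\{e_1,\dots,e_n\}$ — it does not see $n$ or any edges of $G$ other than those recorded in $S$.

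The main step is therefore to define an auxiliary "transition" structure: for each vertex $e_i\in V(S)$ with incident red edge $\ell_i=e_ie_{a}$ and incident blue edge $\ell_i'=e_ie_{b}$, record the two directed transitions $(e_i,\text{right})\mapsto(e_{a},\cdot)$ and $(e_i,\text{left})\mapsto(e_{b},\cdot)$, reading off from the definition $e(\ell)=v_{i+1}v_{a+1}$ respectively $e(\ell')=v_iv_b$ exactly which endpoint we arrive at and hence in which direction along $H$ the next segment runs, and which vertex of $S$ we will next encounter (namely the nearest element of $V(S)$ in that direction along the cyclic order, using the no-neighbouring-vertices hypothesis to guarantee it is not $e_{a\pm1}$ immediately). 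This yields a permutation on the finite set $V(S)\times\{\text{left},\text{right}\}$ — or more precisely a matching-plus-successor structure — whose cycles are in bijection with the components of $F(S)$ that meet $V(S)$; the components of $F(S)$ avoiding $V(S)$ entirely are exactly the cycles of $H$ disjoint from $V(S)$, of which there are none unless $V(S)=\emptyset$, and in any case their count depends only on the positions of $V(S)$. Since by hypothesis $S_1$ and $S_2$ are isomorphic as coloured graphs with corresponding vertices in the same cyclic order, these transition structures are isomorphic, hence have the same number of cycles, hence $F(S_1)$ and $F(S_2)$ have the same number of components.

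To make this rigorous I would formalise a component of $F(S)$ as an alternating sequence of "$H$-segments" (maximal runs of consecutive edges $e_c,e_{c+1},\dots,e_{d}$ none of which is in $V(S)$, possibly empty) and inner edges $e(\ell)$, and check that consecutive items are forced: an inner edge $e(\ell)$ with $\ell$ incident to $e_i\in V(S)$ has $e_i$ as a "neighbour" of one of its endpoints along $H$, and since $e_i\notin E(F(S))$ while both edges of $H$ at each endpoint of $e(\ell)$ other than the ones corresponding to $V(S)$-vertices are present, the walk continues deterministically. Then the bijection between cycles of $F(S)$ and cycles of the transition permutation is immediate, and the permutation is manifestly a function of the data preserved by the hypothesised isomorphism.

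The main obstacle I expect is bookkeeping the two "sides" correctly — the red/blue distinction in \Cref{defn:aux} shifts indices by one in an asymmetric way ($e(\ell)=v_{i+1}v_{j+1}$ for red versus $v_iv_j$ for blue), so one must be careful that "arriving at $e_i$ from the right along $H$" pairs with the correct colour of the edge of $S$ used to leave, and that the successor-in-the-cyclic-order is computed on the right side. Once the state space $V(S)\times\{\text{left},\text{right}\}$ and its transition map are set up so that these sign conventions are consistent, the rest is a routine bijection argument, and the invariance under the isomorphism of \Cref{obs:samecomps} follows because the transition map was built only from the coloured adjacency structure of $S$ and the cyclic order of $V(S)$.
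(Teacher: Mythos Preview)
Your approach is correct and is essentially the only natural way to make this observation precise: contract the $H$-arcs between consecutive elements of $V(S)$ to single ``segments'' and observe that the cycle structure of $F(S)$ is exactly the cycle structure of the union of two perfect matchings on these segments --- one coming from the red edges of $S$ (joining left endpoints), one coming from the blue edges of $S$ composed with the cyclic-successor map on $V(S)$ (joining right endpoints). Both matchings are visibly determined by the coloured-graph structure of $S$ together with the cyclic order of $V(S)$ along $H$, which is precisely the data preserved by the hypothesised isomorphism.

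The paper, for its part, gives no proof at all: the observation is preceded only by the sentence ``it is immediate that if $S$ is embedded in auxiliary graphs of two Hamiltonian graphs (possibly with different number of vertices) in the same order then $F(S)$ has the same number of cycles in both cases.'' So there is nothing to compare against beyond noting that your write-up is the natural unpacking of that one-line claim. One small slip: in your second paragraph you say ``red if we arrived from the left, blue otherwise,'' but in fact arriving at $v_i$ (the left endpoint of $e_i$) forces the \emph{blue} inner edge $e(\ell_i')=v_iv_b$, while arriving at $v_{i+1}$ forces the red one --- exactly the kind of sign issue you yourself flag as the main obstacle, and easily fixed once you sit down to write out the transition map carefully.
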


We remark that it is not always true that all $2$-factors of $G$ arise as $F(S)$ for some $S \subseteq A.$

\subsection{Controlling the number of cycles} \label{control}

It is not hard to see that the auxiliary graph $A$ (of a graph with a big enough minimum degree) must contain a colour-alternating cycle $C$, which corresponds to a $2$-factor $F(C) \subseteq G$ by \Cref{correspondence} (disregarding, for the moment, the issue of $C$ containing neighbouring vertices). However, it is not at all obvious how to generally determine the number of components of $F(C).$ 
We begin by giving a rough upper bound.

\begin{obs} \label{obs:no.cycles}
If $C \subseteq A$ is a non-empty colour-alternating cycle of length $L$ without neighbouring vertices, then the number of components of the corresponding $2$-factor $F(C)$ is at most $L$.
\end{obs}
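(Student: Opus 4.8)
The plan is to count the components of $F(C)$ by understanding how $F(C)$ is built out of arcs of the Hamilton cycle $H$. Write $C = e_{i_1} e_{i_2} \dots e_{i_L} e_{i_1}$, where the indices are distinct and, by the no-neighbouring-vertices assumption, no two of the $i_j$ are consecutive modulo $n$. Removing the $L$ vertices $e_{i_1}, \dots, e_{i_L}$ from the cyclic sequence $e_1, \dots, e_n$ breaks $H$ into exactly $L$ arcs $P_1, \dots, P_L$ (each a path on a contiguous block of vertices of $G$, possibly a single vertex, and non-empty precisely because no two removed edges are adjacent). By \Cref{defn:correspondence}, $E(F(C))$ consists of the edges of all these arcs $P_1, \dots, P_L$ together with the $L$ inner edges $e(\ell)$, one for each edge $\ell$ of $C$. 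So $F(C)$ is obtained from the disjoint union of $L$ paths by adding $L$ extra edges.

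First I would argue that each added inner edge $e(\ell)$ joins an endpoint of some arc to an endpoint of some arc. Indeed if $\ell = e_ae_b$ is, say, red, then $e(\ell) = v_{a+1}v_{b+1}$; since $e_a \in V(C)$ but (by no neighbouring vertices) $e_{a+1} \notin V(C)$, the vertex $v_{a+1}$ is the left endpoint of the arc immediately following the deleted edge $e_a$, and similarly $v_{b+1}$ is an arc endpoint; the blue case is symmetric using $v_a, v_b$. Hence $F(C)$ is exactly a graph on $2L$ arc-endpoints (with the arcs as "super-edges") plus $L$ connecting edges among these endpoints, and since $F(C)$ is a $2$-factor by \Cref{correspondence}, every one of the $2L$ arc-endpoints has exactly one incident connecting edge, i.e. the $L$ connecting edges form a perfect matching on the $2L$ endpoints.

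Now I would contract each arc $P_j$ to a single vertex: the resulting multigraph has $L$ vertices and $L$ edges (the connecting inner edges), and its number of connected components equals the number of components of $F(C)$, because contracting a path does not change the component structure. A graph on $L$ vertices with $L$ edges has at most $L$ connected components — in fact at most $\lfloor L/2 \rfloor$, since $2$-regularity forces it to be a union of cycles, but the weaker bound $L$ is all that is claimed. This gives the desired bound. The only mild subtlety — the step I would be most careful about — is the degenerate possibility that $L$ is so large that deleting the $e_{i_j}$ leaves an empty arc, but this cannot happen: the no-neighbouring-vertices condition guarantees that between any two consecutive deleted edges on the cycle there is at least one surviving vertex of $G$, so all $L$ arcs are genuinely present and the count is exact.
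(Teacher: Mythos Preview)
Your argument is correct, but it is considerably more elaborate than the paper's. The paper dispatches the observation in one line: $F(C)$ contains exactly $L$ inner edges (one for each edge $\ell$ of $C$), and since $C$ is non-empty we have $F(C)\neq H$, so every cycle of $F(C)$ must contain at least one inner edge; hence there are at most $L$ cycles. Your arc-decomposition and contraction route reaches the same conclusion and indeed recovers the sharper $\lfloor L/2\rfloor$ bound via $2$-regularity of the contracted multigraph (the paper also notes this parenthetically, remarking that each cycle in fact uses at least two inner edges). The paper's counting argument is quicker for the stated bound; your approach gives a more explicit picture of how the components of $F(C)$ are stitched together from Hamiltonian arcs and inner edges, which is closer in spirit to the structural manipulations in the later going-up and going-down lemmas.
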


\begin{proof}
Note that the $2$-factor $F(C)$ contains exactly $L$ inner edges and, since $F(C) \neq H$, each cycle of $F(C)$ must contain at least one inner edge (in fact, at least two in our setting).
\end{proof} 

However, in order to prove \Cref{thm:main}, we need to be able to show the existence of a $2$-factor consisting of exactly $k$ cycles, for a fixed predetermined number $k$. This is where we are going to make use of \Cref{lem:cycle-blow-up,lem:ordered} which allow us to find a consistently ordered blow-up of $C$.  This will give us the freedom to find slight modifications of $C$ with different numbers of cycles in $F(C)$.

\subsubsection{Going up}
In this subsection we give a modification of a union of colour-alternating cycles which will have precisely one more cycle in its corresponding $2$-factor. 
\begin{defn}\label{defn:goingup}
Let $S$ be a disjoint union of colour-alternating cycles with $V(S)=\{s_1, \dots, s_m\}$ and let $C$ be a cycle of $S$. We construct a $2$-edge-coloured ordered graph $U(S,C)$ as follows:
\begin{enumerate}
    \item Start with a copy of $S$ and for every $s_i \in V(C)$, add a vertex $s_{i+1/2}$;
    \item For every red or blue edge $s_i s_j \in E(C)$, add an edge $s_{i+1/2} s_{j+1/2}$ of the same colour;
    \item Order the resulting graph according to the order of the indices of its vertices.
\end{enumerate}
Given a $2$-edge-coloured ordered graph $U$, we say that $U$ is a \emph{going-up version} of $S$, if there exists a component $C$ of $S$ such that $U$ and $U(S,C)$ are isomorphic $2$-edge-coloured ordered graphs.
\end{defn}

In other words $U(S,C)$ consists of $S$ with an additional copy of $C$ ordered in such a way that the vertices of the new copy of $C$ immediately follow their corresponding vertices of the original copy of $C$. In particular, $U$ is also a disjoint union of colour-alternating cycles and is an ordered subgraph of a consistently ordered $S(2)$. Note if $S$ contains no double edges, neither does $U.$

\Cref{fig:goingup} shows what a going-up version $U$ of $S$ looks like if $S$ is just a colour-alternating $C_4$. \Cref{fig:goingupG} shows what the corresponding $2$-factors look like (assuming $S \subseteq U \subseteq A$). Note that the dashed cycles of $F(U)$ have the same structure as the dashed cycles in $F(S)$ but $F(U)$ additionally has a new bold cycle. We now show that a similar situation occurs in general.

\begin{figure}[ht]
    \caption{A colour-alternating cycle $S$ and a going-up version of it $U$}
    \includegraphics[scale = 0.8]{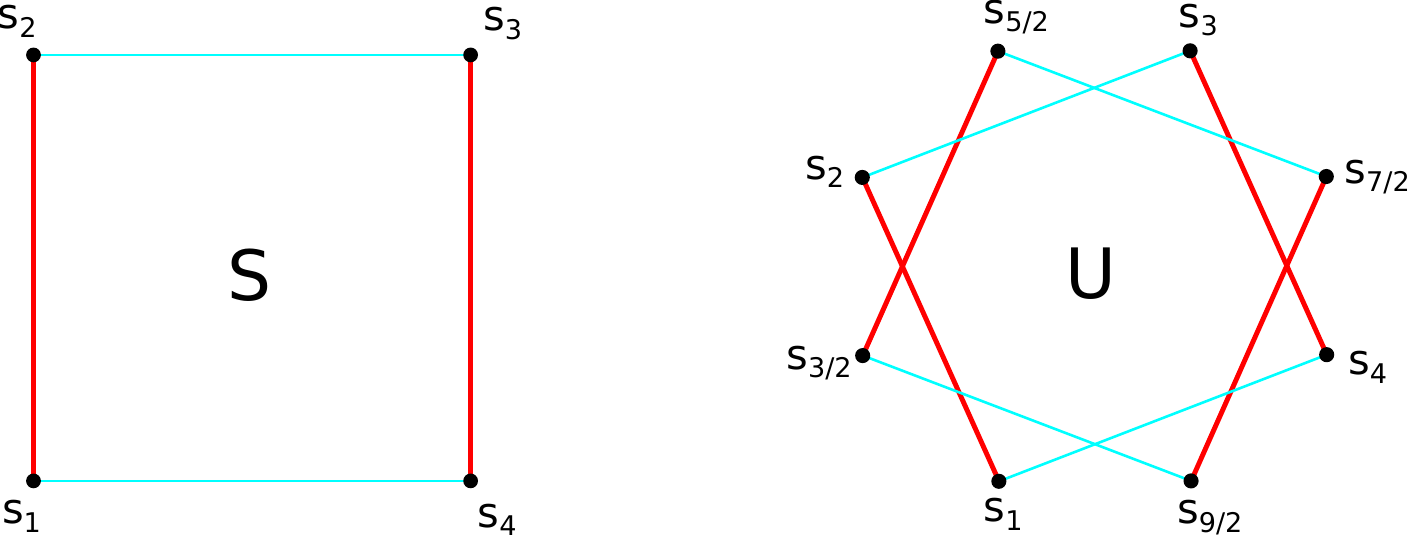}
    \label{fig:goingup}
\end{figure}

\begin{figure}[ht]
    \caption{$2$-factors corresponding to $U$ and $S$ given in \Cref{fig:goingup}.}
    \includegraphics[scale = 0.8]{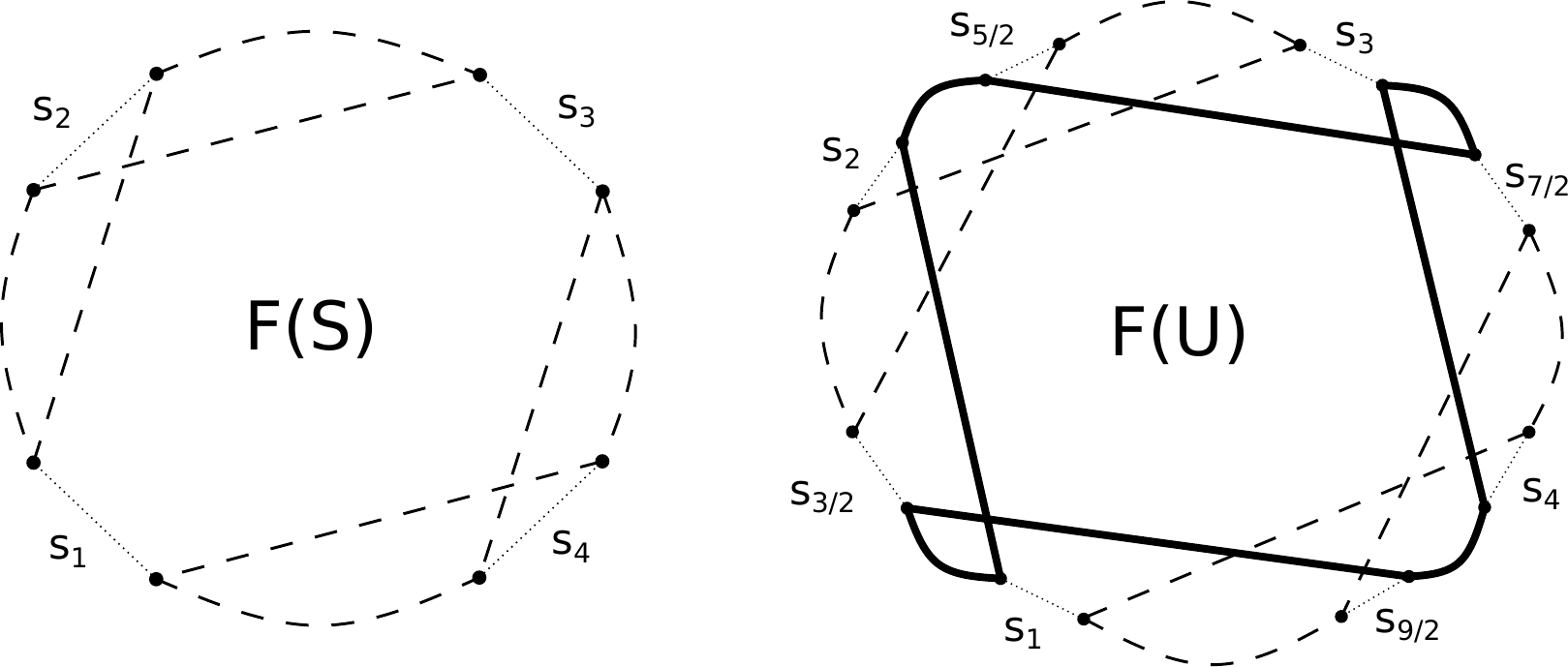}
    \label{fig:goingupG}
\end{figure}

\begin{lem}[Going up]\label{lem:goingup}
Let $S \subseteq A$ be a disjoint union of colour-alternating cycles without neighbouring vertices and let $U$ be an ordered subgraph of $A$ without neighbouring vertices that is a going-up version of $S$. Then, the $2$-factor $F(U) \subseteq G$ has exactly one component more than $F(S)$.
\end{lem}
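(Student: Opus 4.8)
The plan is to track exactly how the inner edges and Hamilton-cycle edges of $F(U)$ differ from those of $F(S)$, and to show the net effect is to split off precisely one new cycle. Write $C$ for the component of $S$ that is duplicated, with $V(C)=\{s_{i_1},\dots,s_{i_r}\}$ and a new copy $C'$ on vertices $\{s_{i_1+1/2},\dots,s_{i_r+1/2}\}$; the components of $S$ other than $C$ are untouched by the construction, and (by \Cref{obs:samecomps}) they contribute to $F(U)$ exactly the same cycles as to $F(S)$, so I may restrict attention to $C$ and $C'$. Thus it suffices to compare the subgraph $F_C$ of $F(S)$ spanned by the edges "local to $C$" with the corresponding piece $F_{C\cup C'}$ of $F(U)$, and show the latter has one more cycle than the former.

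First I would set up the local picture. Each vertex $s_{i_t}=e_{p_t}$ of $C$ is an edge $v_{p_t}v_{p_t+1}$ of $H$; since $U$ has no neighbouring vertices and the new vertices $s_{i_t+1/2}$ sit immediately after $s_{i_t}$ in the order, the vertex of $A$ playing the role of $s_{i_t+1/2}$ is $e_{p_t+1}$ (it is forced to be the next edge of $H$ after $e_{p_t}$, because $U\subseteq A$ is consistently ordered and the blow-up structure leaves no room for anything else). So in $G$, where $F(S)$ simply deletes the edge $e_{p_t}=v_{p_t}v_{p_t+1}$ and routes through the inner edges $e(\ell)$ for $\ell\in E(C)$, the factor $F(U)$ additionally deletes $e_{p_t+1}=v_{p_t+1}v_{p_t+2}$ and routes through the inner edges coming from $E(C')$. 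The key combinatorial observation is that if the two edges of $C$ at $s_{i_t}$ are a red edge to $s_{i_a}$ and a blue edge to $s_{i_b}$, then in $F(S)$ the vertex $v_{p_t+1}$ is joined to $v_{p_a+1}$ (red, from $e(\ell)=v_{p_t+1}v_{p_a+1}$) and $v_{p_t}$ is joined to $v_{p_b}$ (blue), while in $F(U)$ the freshly-inserted copy $C'$ re-wires these: $v_{p_t+1}$ now also gets the edge to $v_{p_a+2}$ and $v_{p_t+2}$ gets the edge to $v_{p_b+1}$, and the deleted edge $e_{p_t+1}$ removes the old $H$-link between $v_{p_t+1}$ and $v_{p_t+2}$. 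Writing everything out, $F(U)$ restricted to the relevant vertices is obtained from $F(S)$ by a surgery along the cyclic sequence of indices of $C$ that detaches, as a single closed walk, exactly the new copy $C'$ together with the length-$1$ $H$-segments $v_{p_t}v_{p_t+1}$ — i.e. one new cycle — while leaving the rest connected as before.

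Making "exactly one new cycle" precise is the step I expect to be the main obstacle, since naively one might fear the surgery either disconnects $F_C$ into more pieces or merges cycles. The clean way to handle this is the reduction already flagged in \Cref{obs:samecomps}: choose a concrete small Hamiltonian graph $G_0$ with Hamilton cycle $H_0$ in which $S$ (equivalently $U$) embeds in the given order — for instance take $G_0$ to be $H_0$ together with precisely the inner edges $e(\ell)$, $\ell\in E(S)\cup E(U)$, on the minimal number of vertices realising the given cyclic order of all indices $p_t$ and $p_t+1$. Then $F(S)$ and $F(U)$ for $G$ have the same numbers of cycles as $F(S_0)$ and $F(U_0)$ for $G_0$, and in $G_0$ the factors $F(S_0),F(U_0)$ are completely explicit $2$-regular graphs that I can analyse by direct inspection/an Euler-tour argument: $F(U_0)$ is literally $F(S_0)$ with the cycle through the $v_{p_t}v_{p_t+1}$–arcs "doubled" in the way \Cref{fig:goingupG} depicts. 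A short parity/trace-through of the closed walk then yields that $F(U_0)$ has exactly one more component than $F(S_0)$. Finally I would also check the degenerate cases — $C$ being a $C_2$ (a red and a blue edge between the same pair), or consecutive vertices of $C$ in $A$ — noting the hypothesis that $U$ has no neighbouring vertices rules out the genuinely problematic configurations and that $U$ inherits colour-alternation and freedom from double edges from $S$ (as recorded right after \Cref{defn:goingup}), so \Cref{correspondence} applies to both $F(S)$ and $F(U)$ and all objects in sight are honest $2$-factors.
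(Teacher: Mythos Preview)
There is a genuine error in your local picture. You write that if $s_{i_t}=e_{p_t}$ then ``the vertex of $A$ playing the role of $s_{i_t+1/2}$ is $e_{p_t+1}$ \dots because $U\subseteq A$ is consistently ordered and the blow-up structure leaves no room for anything else''. This is false, and in fact contradicts the very hypothesis you invoke: if $s_{i_t+1/2}=e_{p_t+1}$ then $s_{i_t}$ and $s_{i_t+1/2}$ are neighbouring vertices of $A$, which $U$ is assumed not to have. The point is that ``$s_{i_t+1/2}$ immediately follows $s_{i_t}$'' refers to the order on $V(U)$, not on $V(A)$; an ordered embedding only requires that no \emph{other vertex of $U$} lies between them. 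In general $s_{i_t+1/2}=e_{q_t}$ with $q_t\ge p_t+2$, and the segment of $H$ from $v_{p_t+1}$ to $v_{q_t}$ is a nontrivial path $P_t$. All of your subsequent computations (``$F(U)$ additionally deletes $e_{p_t+1}=v_{p_t+1}v_{p_t+2}$'', the edges to $v_{p_a+2}$, etc.) are built on this wrong identification.

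A second problem is the reduction in your first paragraph. Cycles of $F(S)$ are global objects that may use inner edges coming from several components of $S$ at once, so it is not legitimate to say that the components of $S$ other than $C$ ``contribute to $F(U)$ exactly the same cycles as to $F(S)$'' and then restrict to $C\cup C'$; \Cref{obs:samecomps} does not give you anything local of this kind. Finally, even after your reduction to a model graph $G_0$, the sentence ``a short parity/trace-through of the closed walk then yields \dots'' is exactly the substantive claim to be proved, not a proof of it. The paper handles all of this at once: it shows that the paths $P_t\subseteq H$ together with the inner edges coming from the duplicated copy of $C$ form a \emph{single} explicit cycle $Z\subseteq F(U)$, and then contracts each $P_t$ (together with its endpoints) to a new Hamilton edge to obtain a smaller Hamiltonian graph $G'$ in which $F(U)\setminus Z$ is precisely $F(S')$ for an order-isomorphic copy $S'$ of $S$; now \Cref{obs:samecomps} applies to all of $S$ and $S'$, giving the result.
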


\begin{proof}[ of \Cref{lem:goingup}]
For an edge $e=v_k v_{k+1}\in H$ we let $v^{+}(e)=v_{k+1}$ and $v^{-}(e)=v_k.$ 
We denote the vertices of $S$ by $s_1, \dots, s_m$ according to their order in $A$. Let $C$ be a colour-alternating cycle $s_{j_1} \dots s_{j_k} s_{j_1}$ in $S$ for which $U=U(S,C)$. Let us denote the vertices of $U$ by $u_1, \dots, u_m$ and $u_{j_1 + 1/2}, \dots, u_{j_k+1/2}$ as they appear along $H$ such that $u_1, \dots, u_m$ make a copy of $S$ and $u_{j_1}, \dots, u_{j_k}$ correspond to $C$.
The vertices $v^+(u_{j_i})$ and $v^-(u_{j_i + 1/2})$ are connected in $F(U)$ by paths $P_i \subseteq H$ for $i \in \{1, \dots, k\}$. Furthermore, since $C$ is a colour-alternating cycle either $v^+(u_{j_i})v^+(u_{j_{i+1}})\in E(G)$ for all odd $i$ and $v^-(u_{j_i + 1/2})v^-(u_{j_{i+1} + 1/2})\in E(G)$ for all even $i$ or vice versa in terms of parity. This means that taking all $P_i$ and these edges we obtain one cycle $$Z:=v^+(u_{j_1})v^+(u_{j_2})P_2v^-(u_{j_2+1/2})v^-(u_{j_3+1/2})P_3v^{+}(u_{j_3})\dots P_kv^-(u_{j_k + 1/2})v^-(u_{j_1+1/2})P_1v^+(u_{j_1})\in F(U),$$ if $C$ starts with a red edge (which is exactly the bold cycle in the example shown in \Cref{fig:goingupG}) or 
$$Z:=v^-(u_{j_1+1/2})v^-(u_{j_2+1/2})P_2v^+(u_{j_2})v^+(u_{j_3})P_3v^{-}(u_{j_3+1/2})\dots P_kv^+(u_{j_k})v^+(u_{j_1})P_1v^-(u_{j_1+1/2})\in F(U),$$
if $C$ starts with a blue edge.

Let us now consider the graph $G'$ that is obtained from $G$ by deleting $Z$ (including all edges incident to vertices of $Z$) and adding the edges $S_{j_i} = v^-(u_{j_i}) v^+(u_{j_i + 1/2})$ for $i \in \{1, \dots, k\}$. Let $H'$ be the Hamilton cycle of $G'$ made of $H$ and $S_j$'s ordered according to the order of $G$. We claim that sending the vertices $s_i$ to $S_i$ if $s_i \in C$ and to $u_i$ otherwise for $i \in \{1, \dots, m\}$ gives an order-preserving isomorphism from $S$ to its image $S' \subseteq A(G', H')$. Indeed, if $s_i, s_j \notin C$, then the fact that $u_i u_j$ is a red or a blue edge whenever $s_i s_j$ is a red or a blue edge just follows from \Cref{defn:goingup}. Furthermore, if $s_{j_i} s_{j_{i+1}}$ is a red edge for $i \in \{1, \dots, k\}$, then $v^+(s_{j_i + 1/2})$ is adjacent to $v^+(s_{j_{i+1}+1/2})$, which means that $S_i S_{i+1}$ is a red edge. This works analogously for blue edges of $C$, which shows the claim. Hence, by \Cref{obs:samecomps}, the $2$-factor $F(S')$ in $G'$ has the same number of components as $F(S)$ in $G$. However, since $F(S')$ is by definition just $F(U) \setminus Z$, this completes the proof.
\end{proof}

\subsubsection{Going down}
We now turn to the remaining case when we want to find a $2$-factor with less components than one that we already found.

\begin{defn}
Let $S \subseteq A$ be a disjoint union of colour-alternating cycles without neighbouring vertices. We say that a vertex $e_k \in V(A)$ \emph{separates components of $F(S)$} if the vertices $v_k$ and $v_{k+1}$ lie in different connected components of $F(S)$.
\end{defn}

\begin{obs}\label{obs:sepcomps}
If $F(S)$ has more than one connected component, then at least one vertex of $S$ separates components.
\end{obs}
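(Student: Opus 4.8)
\textbf{Proof proposal for \Cref{obs:sepcomps}.}
The plan is to prove the contrapositive: if no vertex of $S$ separates components of $F(S)$, then $F(S)$ is connected. The key observation driving everything is that the Hamilton cycle $H=v_1v_2\dots v_nv_1$ passes through \emph{every} vertex of $G$, so if I can show that each pair of consecutive vertices $v_k,v_{k+1}$ of $H$ lies in a common component of $F(S)$, then by chaining these relations along $H$ all of $V(G)$ must lie in a single component.

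To establish the per-edge claim, I would fix an index $k\in\{1,\dots,n\}$ (indices read modulo $n$, identifying $v_{n+1}=v_1$) and split into two cases according to whether $e_k\in V(S)$. If $e_k\notin V(S)$, then by \Cref{defn:correspondence} the edge $e_k=v_kv_{k+1}$ of $H$ is an edge of $F(S)$, so $v_k$ and $v_{k+1}$ are trivially in the same component. If $e_k\in V(S)$, then by the standing assumption $e_k$ does not separate components, which by definition means exactly that $v_k$ and $v_{k+1}$ lie in the same component of $F(S)$. Either way $v_k$ and $v_{k+1}$ are in a common component, as needed, and since $k$ was arbitrary the conclusion follows; as $F(S)$ is a $2$-factor on the non-empty vertex set $V(G)$, ``a single component'' is the same as ``exactly one component'', contradicting the hypothesis that $F(S)$ has more than one.

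There is no real obstacle here: the statement is essentially a connectivity bookkeeping argument. The only point requiring a moment's care is the case distinction on whether $e_k$ is a vertex of $S$, together with correctly invoking \Cref{defn:correspondence} in the case $e_k\notin V(S)$ and the definition of ``separates components'' in the case $e_k\in V(S)$; the cyclic indexing modulo $n$ along $H$ should also be kept in mind so that the chaining argument closes up around the whole cycle.
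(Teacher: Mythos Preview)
Your proof is correct and essentially the same as the paper's, just phrased as the contrapositive: the paper argues directly that if $F(S)$ is disconnected, some consecutive $v_k,v_{k+1}$ lie in different components, whence $e_k\notin E(F(S))$ forces $e_k\in V(S)$ and $e_k$ separates. Both arguments hinge on the identical observation that $e_k\notin V(S)$ implies $e_k\in E(F(S))$, together with chaining along $H$.
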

\begin{proof}
Since $F(S)$ is not connected there must exist vertices $v_k,v_{k+1}$ of $H$ belonging to different components of $F(S).$ Let $e_k=v_k v_{k+1}$ so $e_k \notin E(F(S))$. Since the only edges of $H$ (that is vertices of $A$) that are not in $E(F(S))$ are vertices of $S,$ $e_k$ is the desired separating vertex. 
\end{proof}

We are now ready to construct a going-down version of $S$ giving rise to a $2$-factor with one less cycle.

\begin{defn}\label{defn:goingdown}
Let $S$ be a disjoint union of colour-alternating cycles with $V(S)=\{s_1, \dots, s_m\}$. For any $s_k \in V(S)$ we construct the $2$-edge-coloured ordered graph $D=D(S,s_k)$ as follows:
\begin{enumerate}
    \item Start with a copy of $S$ and for every vertex $s_i$ in the cycle $C\subseteq S$ that contains $s_k$, add the vertices $s_{i+1/3}$ and $s_{i+2/3}$ to $D$;
    \item if $i, j \neq k$ and if $s_i s_j$ is a red or a blue edge of $S$, then add the edges $s_{i+1/3}s_{j+1/3}$ and $s_{i+2/3}s_{j+2/3}$ of the same colour to $D$;
    \item if $s_i s_k$ is the blue edge of $S$ incident to $s_k$, then delete it and add the blue edges $s_i s_{k+1/3}$, $s_{i+1/3} s_{k+2/3}$ and $s_{i+2/3} s_k$ to $D$;
    \item if $s_i s_k$ is the red edge of $S$ incident to $s_k$, then add the red edges $s_{i+1/3} s_{k+2/3}$ and $s_{i+2/3} s_{k+1/3}$ to $D$;
    \item order the resulting graph according to the order of the indices of its vertices.
\end{enumerate}
Let $S\subseteq A$ be a disjoint union of colour alternating cycles without neighbouring vertices, so that $F(S)$ exists. We say that a $2$-edge-coloured ordered graph $D$ is a \emph{going-down version} of $S$ if there exists a vertex $s_k$ that separates components of $F(S)$ such that $D$ and $D(S,s_k)$ are isomorphic $2$-edge-coloured ordered graphs.
\end{defn}

In other words $D=D(S,s_k)$ consists of a copy of $S$ with added two copies of the cycle containing $s_k$ where the edges incident to $s_k$ and its copies are rewired in a certain way. It is easy to see that every vertex of $D$ is still incident to exactly one edge of each colour so is still a disjoint union of colour-alternating cycles. Note also that $D$ is an ordered subgraph of consistently ordered $S(3).$ If $S$ contained no double edges neither does $D$.   

\Cref{fig:goingdown} shows a going-down version $D = D(S, s_1)$ for $S$ on $\{s_1, \dots, s_4\}$ being again a colour-alternating $C_4$. Note that $F(D)$, shown in \Cref{fig:goingdownG}, contains two paths, marked as dotted and bold, that connect the two dashed parts of $F(D)$ that resemble the two disjoint cycles of $F(S),$ into a single cycle. We will show that this occurs in general. 

\begin{figure}[ht]
    \caption{A colour-alternating cycle $S$ and a going-down version of it $D(S,s_1).$}
    \includegraphics[scale = 0.8]{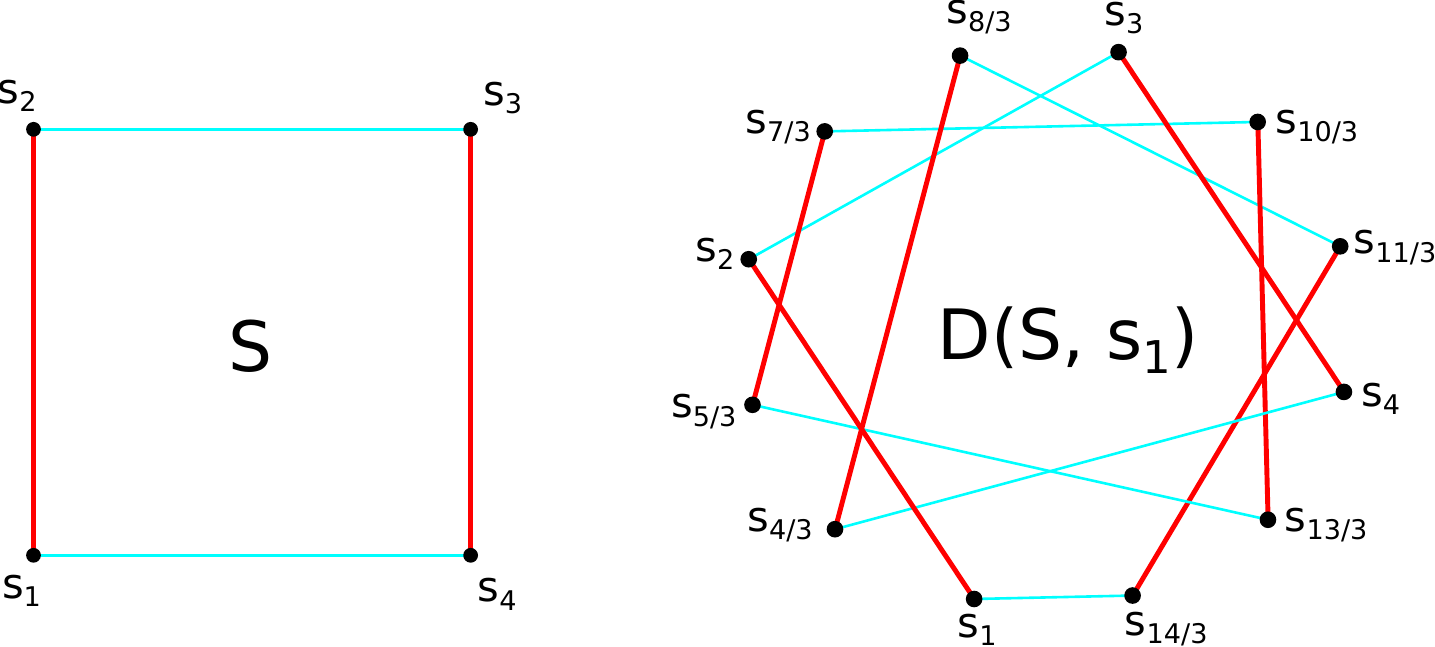}
    \label{fig:goingdown}
\end{figure}

\begin{figure}[ht]
    \caption{$2$-factors corresponding to $U$ and $D(S,s_1)$ given in \Cref{fig:goingdown}.}
    \includegraphics[scale = 0.8]{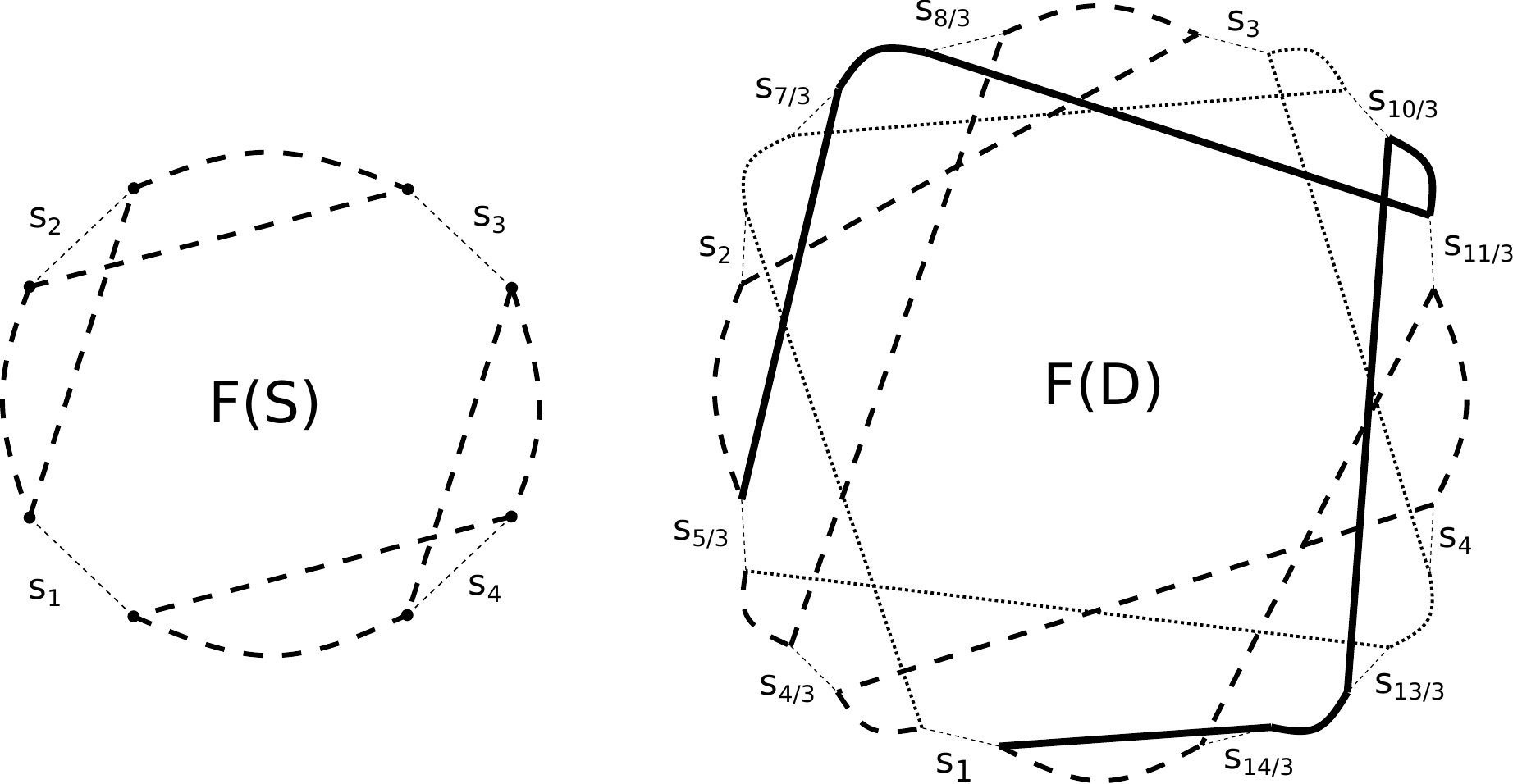}
    \label{fig:goingdownG}
\end{figure}

\begin{lem}[Going down]\label{lem:goingdown}
Let $S \subseteq A$ be a disjoint union of colour-alternating cycles without neighbouring vertices and let $D$ be an ordered subgraph of $A$ without neighbouring vertices that is a going-down version of $S$. Then the $2$-factor $F(D) \subseteq G$ consists of one cycle less than $F(S)$.
\end{lem}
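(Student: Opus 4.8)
The plan is to follow the proof of \Cref{lem:goingup} almost verbatim. For an edge $e = v_kv_{k+1}\in H$ write $v^+(e) = v_{k+1}$ and $v^-(e) = v_k$. Let $C$ be the cycle of $S$ containing $s_k$, and let $s_as_k$ and $s_bs_k$ be, respectively, the blue and the red edge of $C$ incident to $s_k$; by \Cref{defn:goingdown}, $s_k$ separates components of $F(S)$, so $v^-(s_k)$ and $v^+(s_k)$ lie in two distinct components of $F(S)$. List the vertices of $D$ --- that is, the copy of $S$ together with the extra vertices $s_{i+1/3}, s_{i+2/3}$ for $s_i \in C$ --- in the order in which they occur along $H$, and for any two consecutive ones let the arc of $H$ joining them be the corresponding subpath of $H$, so that $F(D)$ is the union of all these arcs together with the inner edges $e(\ell)$, $\ell\in E(D)$.

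First I would carry out a surgery on $(G,H)$, exactly as in the proof of \Cref{lem:goingup}, to obtain a Hamiltonian graph $(G',H')$: delete from $G$ the vertices that the two extra copies of $C$ contribute to $F(D)$ --- namely $v^\pm$ of (the images of) all the $s_{i+1/3}, s_{i+2/3}$ together with the interiors of the arcs of $H$ lying strictly between consecutive such vertices --- and add back the handful of edges needed to reconstitute the relevant inner edges of $C$ at these positions, together with the single extra edge at $s_k$ required to close up the Hamilton cycle; let $H'$ be $H$ with these replacements. I would then check, directly from \Cref{defn:goingdown}, that sending each $s_i$ to the edge occupying the corresponding slot of $H'$ is an order-preserving colour isomorphism from $S$ onto a subgraph $S'\subseteq A(G',H')$ which is again a disjoint union of colour-alternating cycles without neighbouring vertices. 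By \Cref{obs:samecomps}, $F(S')$ then has exactly as many components as $F(S)$.

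It remains to compare $F(S')\subseteq G'$ with $F(D)\subseteq G$. By construction the latter is obtained from the former by re-inserting the deleted arcs and vertices --- which subdivides but does not reconnect the cycles of $F(S')$ --- except that, because of the asymmetric rewiring of the edges at $s_k$ prescribed by rules (3)--(4), two cycles of $F(S')$ get spliced into a single cycle of $F(D)$. Hence $F(D)$ has precisely one component fewer than $F(S')$, and therefore one fewer than $F(S)$, as claimed.

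I expect the crux of the argument to be exactly this last comparison. Rules (3) and (4) of \Cref{defn:goingdown} treat the blue and the red edge at $s_k$ differently, so one must keep careful track of which arcs of $H$ and which inner edges survive in $(G',H')$, verify that the surgery creates no neighbouring vertices and no double edges, and confirm that the net difference between $F(S')$ and $F(D)$ really is a single cycle split into two --- a verification that branches into cases in the same way as the proof of \Cref{lem:goingup} did according to the colour of the first edge of $C$.
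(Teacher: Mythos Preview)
Your outline follows the paper's strategy---surgery to a smaller Hamiltonian graph $(G',H')$, an order-preserving embedding of $S$ into $A(G',H')$, \Cref{obs:samecomps}, and a final comparison---but the surgery you describe is not the one that works, and the ``last comparison'' you defer is precisely where the content of the lemma lies.

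The paper does \emph{not} delete the two extra copies of $C$ uniformly. At every vertex $s_{j_i}\in C\setminus\{s_k\}$ it collapses all three copies $d_{j_i},d_{j_i+1/3},d_{j_i+2/3}$ into a single new Hamilton edge $S_{j_i}=v^-(d_{j_i})v^+(d_{j_i+2/3})$, but at the separating vertex $s_k$ it keeps all three copies $d_k,d_{k+1/3},d_{k+2/3}$ intact. Crucially, in the isomorphism $S\to S'\subseteq A(G',H')$ the vertex $s_k$ is sent to the \emph{middle} copy $d_{k+1/3}$. Rules (3) and (4) of \Cref{defn:goingdown} are exactly what makes this work: the blue neighbour of $d_{k+1/3}$ is $d_a$ (so its blue neighbour in $A(G',H')$ is $S_a$) and its red neighbour is $d_{b+2/3}$ (so its red neighbour in $A(G',H')$ is $S_b$). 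If instead you delete the $+1/3$ and $+2/3$ copies at $s_k$ as you propose, the surviving copy $d_k$ has its blue edge going to $d_{a+2/3}$, which you have deleted, and there is no edge of $G$ giving a blue edge between the collapsed $S_k$ and $S_a$; your isomorphism fails at exactly the vertex that matters.

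The point of keeping all three copies at $s_k$ is that $d_{k+1/3}$ then separates components of $F(S')$, so the edges $d_k$ and $d_{k+2/3}$ of $H'$ lie in two different cycles of $F(S')$. The paper then writes down two explicit paths $P,Q\subseteq F(D)$, built from the arcs you discarded and the rewired inner edges, with $P$ running from $v^+(d_k)$ to $v^-(d_{k+2/3})$ and $Q$ from $v^+(d_{k+2/3})$ to $v^-(d_k)$; passing from $F(S')$ to $F(D)$ amounts to removing $d_k,d_{k+2/3}$ and inserting $P,Q$, which merges those two cycles into one. This concrete identification of $P$ and $Q$ (with a case split on whether $C$ starts red or blue) is the actual proof, not a routine verification, and it does not emerge from the uniform surgery you sketch.
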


\begin{proof}
For an edge $e=v_kv_{k+1}\in H$ we let $v^{+}(e)=v_{k+1}$ and $v^{-}(e)=v_k.$ 
We denote the vertices of $S$ by $s_1, \dots, s_m$   where $D=D(S,s_1)$ and $s_1$ separates components of $F(S)$. We denote the vertices of $D$ by $d_1, \dots, d_m$ and $d_{4/3}, d_{5/3}, d_{j_1+1/3}, d_{j_1 + 2/3}, \dots, d_{j_k + 2/3}$ as they appear along $H$ such that $d_1, \dots, d_m$ make a copy of $S$ in which $d_1$ corresponds to $s_1$ and $d_1,d_{j_1}, \dots, d_{j_k}$ to the cycle $C = s_1 s_{j_1} \dots s_{j_k} s_1$ of $S.$

The vertices $v^+(d_{j_i})$ and $v^-(d_{j_i + 1/3})$ as well as the vertices $v^+(d_{j_i+1/3})$ and $v^-(d_{j_i+2/3})$ in $F(D)$ are connected by paths $P_i \subseteq H$ and $Q_i \subseteq H$ respectively for all $i \in \{1, \dots, k\}$. 

If $C$ begins by a red edge then $$P:=v^+(d_1)v^+(d_{j_1})P_1v^-(d_{j_1+1/3})v^-(d_{j_2+1/3})P_2v^{+}(d_{j_2})\dots P_kv^-(d_{j_k + 1/3})v^-(d_{5/3})\in F(D),$$ where $v^+(d_1)v^+(d_{j_1})\in F(D)$ by \Cref{defn:goingdown} part 4; $v^-(d_{j_k + 1/3})v^-(d_{5/3})\in F(D)$ by part $3$ and edges between paths $P_i$ are in $F(D)$ by part 2 in the same way as in the going up case. Similarly, $$Q:=v^+(d_{5/3})v^+(d_{j_1+1/3})Q_1v^-(d_{j_1+2/3})v^-(d_{j_2+2/3})Q_2v^{+}(d_{j_2+1/3})\dots Q_kv^-(d_{j_k +2/3})v^-(d_{1})\in F(D)$$
On the other hand if $C$ begins by a blue edge then we have 
$$P:=v^-(d_{5/3})v^-(d_{j_1+1/3})P_1v^+(d_{j_1})v^+(d_{j_2})P_2\dots P_kv^+(d_{j_k})v^+(d_{1})\in F(D),$$
$$Q:=v^-(d_{1})v^-(d_{j_1+2/3})Q_1v^+(d_{j_1+1/3})v^+(d_{j_2+1/3})Q_2\dots Q_kv^+(d_{j_k+1/3})v^+(d_{5/3})\in F(D)$$
So in either case the path $P\subseteq F(D)$ contains $P_1, \dots, P_k$ and has endpoints $v^+(d_1),v^-(d_{5/3})$ while $Q \subseteq F(D)$ contains $Q_1, \dots, Q_k$ and has endpoints $v^+(d_{5/3}),v^-(d_1)$. For example in \Cref{fig:goingdownG}, the paths $P$ and $Q$ correspond to the dotted and the bold path respectively.

Our goal now is to show that $P$ and $Q$ connect two ``originally distinct'' components that are ``inherited'' from $F(S)$. Consider the graph $G'$ that is obtained from $G$ by deleting all the vertices of paths $P_i$ and $Q_i$ (equivalently all inner vertices of $P$ and $Q$) and adding the edges $S_{j_i} = v^-(d_{j_i}) v^+(d_{j_i + 2/3})$ for $i \in \{1, \dots, k\}$. Let $H'$ be the Hamilton cycle of $G'$ made of $H$ and $S_j$'s ordered according to the order of $G$.  First, we claim that the map that sends $s_1$ to $d_{4/3}$ and $s_i$ to $S_i$ if $s_i$ is part of $C \setminus \{s_1\}$ and to $d_i$ otherwise for $i \in \{2, \dots, m\}$ is an order-preserving isomorphism from $S$ onto its image $S' \subseteq A(G', H')$. Indeed, by \Cref{defn:goingdown} parts 3 and 4 for $i=1,k$ if $s_1s_{j_i}$ is red then $d_{4/3}d_{j_i+2/3}$ is a red edge of $A$ so $v^+(d_{4/3})v^+(d_{j_i+2/3})\in F(D)$ implying that $d_{4/3} S_{j_i}$ is red in $A'.$ If $s_1s_{j_i}$ is blue then $d_{4/3}d_{j_i}$ is a blue edge of $A$ so $v^-(d_{4/3})v^-(d_{j_i})\in F(D)$ implying that $d_{4/3} S_{j_i}$ is blue in $A'.$
For $i\neq k$ edge $S_{j_i} S_{j_{i+1}}$ is of the same colour as $s_{j_i}s_{j_{i+1}}$ by \Cref{defn:goingdown} part 2 and for $s_i,s_j \notin  C$ we know $d_i d_j$ has the same colour by part 1. Therefore, by \Cref{obs:samecomps}, $F(S')$ has the same number of components as $F(S)$. Since $s_1$ separates components in $S$ we know that $d_{4/3}$ separates components in $F(S')$. This means in particular that $d_1$ and $d_{5/3}$ lie in two different cycles $C_1$ and $C_2$ of $F(S')$. Now, observe that we obtain $F(D)$ from $F(S')$ by deleting $d_1$ and $d_{5/3}$ and adding the paths $P$ and $Q$. However, since $P$ connects $v^+(d_1)$ and $v^-(d_{5/3})$ and $Q$ connects $v^+(d_{5/3})$ and $v^-(d_1)$, this process joins $C_1$ and $C_2$ into one big cycle and hence, $F(D)$ has exactly one component less than $F(S)$.
\end{proof}

\subsection{Completing the proof}\label{subs:complete}

We are now ready to put all the ingredients together in order to complete our proof of \Cref{thm:main} in the way that has already been outlined throughout the previous section.

\begin{proof}[ of \Cref{thm:main}]
Let $k$ be a positive integer and $\eps$ a positive real number. Let $L=L(\eps/2),K=K(\eps/2,2^k)$ be the parameters coming from \Cref{lem:cycle-blow-up}. Let $N\ge \max(4/\eps,K).$

Now, suppose that $G$ is a Hamiltonian graph on $n \geq N$ vertices with minimum degree $\delta(G) \geq \eps n$. Let us fix a Hamilton cycle $H \subseteq G$, name the vertices of $G$ such that $H = v_1 v_2 \dots v_n v_1$ and assume that $G$ is ordered according to this labelling. Let $A = A(G,H)$ be the ordered, $2$-edge-coloured auxiliary graph corresponding to $G$ and $H$ according to \Cref{defn:aux}. We know by \Cref{deltaA} that $\delta_\nu(A)\ge \delta_\nu(G)-2 \ge \frac{\eps}{2} n.$

\Cref{lem:cycle-blow-up} shows that there is a $\mathcal{C}(2^k6^L)\subseteq A$ where $\mathcal{C}$ is a colour-alternating cycle of length at most $L$ without double-edges. \Cref{lem:ordered} allows us to find a consistently ordered $\mathcal{C}(2^k3^L)$ as an ordered subgraph of $A.$ By removing every second vertex of $\mathcal{C}(2^k3^L)$ in $A$ we obtain a consistently ordered $\mathcal{C}'=\mathcal{C}(2^{k-1}3^L)$ that is an ordered subgraph of $A$ without neighbouring vertices. For $\mathcal{C}\subseteq \mathcal{C}'$  by \Cref{correspondence} we obtain a $2$-factor $F(\mathcal{C}) \subseteq G$. Let $\ell$ be the number of cycles of $F(\mathcal{C}).$ By \Cref{obs:no.cycles}, we know that $1 \leq \ell \leq L$.

Let us first assume that $k > \ell$. We find a sequence $S_0, S_1, \dots, S_{k-\ell}$ defined as follows: let $S_0 = \mathcal{C};$ given $S_{i-1}$ let $\mathcal{C}_{i-1}$ be an arbitrary cycle of $S_{i-1}$ and let $S_{i} = U(S_{i-1}, \mathcal{C}_{i-1})$. By construction, $S_i$ is again a disjoint union of colour-alternating cycles, without double edges, and is an ordered subgraph of $\mathcal{C}(2^i) \subseteq \mathcal{C}'$ (since by construction $S_i\subseteq S_{i-1}(2)$). Therefore, for all $i\le k-\ell$ there is an order-preserving embedding of $S_i$ into $A$ without neighbouring vertices. So, by \Cref{correspondence} and \Cref{lem:goingup} we deduce that $F(S_i)$ has one more cycle than $F(S_{i-1})$. In particular, the $2$-factor $F(S_{k-\ell}) \subseteq G$ consists of exactly $k$ components. 

Let us now assume that $k < \ell$. Here, we find a sequence $S_0, S_1, \dots, S_{\ell - k}$ of disjoint unions of colour-alternating cycles that are ordered subgraphs of $A$ without neighbouring vertices such that $F(S_i)$ consists of $\ell-i$ cycles. Let $S_0 = \mathcal{C},$ and assume we are given $S_{i-1}$ for $i\le \ell-k$ with $F(S_{i-1})$ having $\ell-i+1 \ge k+1 \ge 2$ cycles. This means that $S_{i-1}$ has a vertex $v_{i-1}$ that separates components of $F(S_{i-1})$ by \Cref{obs:sepcomps}. We let $S_i = D(S_{i-1}, v_{i-1})$, which is a disjoint union of colour-alternating cycles, without double edges, and is an ordered subgraph of a consistently ordered $\mathcal{C}(3^i)$ (since by construction $S_i \subseteq S_{i-1}(3)$). Note that $\ell - k \leq L$ by \Cref{obs:no.cycles} and hence, $ \mathcal{C}(3^i) \subseteq \mathcal{C}(3^{\ell - k}) \subseteq \mathcal{C}'$ so we can find a copy of $S_i$ into $A$ without having neighbouring vertices. By \Cref{lem:goingdown}, $F(S_i)$ has one less cycle than $F(S_{i-1})$, so exactly $\ell - i$ cycles. In particular, $F(S_{\ell - k})$ is a $2$-factor in $G$ with $k$ cycles, which concludes the proof.
\end{proof}

\section{Concluding remarks and open problems}
In this paper we show that in a Hamiltonian graph the minimum degree condition needed to guarantee any $2$-factor with $k$-cycles is sublinear in the number of vertices. The best lower bound is still only a constant. 
In the case of a $2$-factor with two components, the best bounds are given by Faudree et al. \cite{Faudree05} who construct minimum degree $4$ Hamiltonian graphs without a $2$-factor with $2$ components. In the case of $2$-factors with $k$ components, no constructions have been given previously, but it is easy to see that a minimum degree of at least $k+2$ is necessary:
\begin{prop}
There are arbitrarily large Hamiltonian graphs with minimum degree $k+1$ which do not have a $2$-factor with $k$ components.
\end{prop}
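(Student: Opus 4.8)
The plan is to settle this by an explicit family rather than any counting argument; note first that the statement is only meaningful for $k\ge 2$, since a Hamiltonian graph trivially has a $2$-factor with one component. For every $k\ge 2$ and every integer $N\ge k+1$ I would take
$G=C_N\vee \overline{K_{k-1}}$, the join of a cycle with an independent set: the vertex set is $\{x_1,\dots,x_N\}\cup Y$ with $Y=\{y_1,\dots,y_{k-1}\}$ independent, $\{x_1,\dots,x_N\}$ inducing a chordless cycle $C_N=x_1x_2\cdots x_Nx_1$, and every $y_j$ joined to every $x_i$.

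Two routine verifications come first. For the degrees, $d(x_i)=2+(k-1)=k+1$ and $d(y_j)=N\ge k+1$, so $\delta(G)=k+1$, while $|V(G)|=N+k-1$ is unbounded as $N\to\infty$. For Hamiltonicity, the cycle $x_1y_1x_2y_2\cdots x_{k-1}y_{k-1}x_kx_{k+1}\cdots x_Nx_1$ uses only edges of $C_N$ and join-edges, so $G$ is Hamiltonian.

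The substance of the argument is to show $G$ has no $2$-factor with exactly $k$ cycles. The key observation is that $G$ restricted to $\{x_1,\dots,x_N\}$ is just the cycle $C_N$, whose only cycle subgraph is $C_N$ itself. Hence, if $F$ were a $2$-factor with cycles $D_1,\dots,D_k$, any $D_i$ disjoint from $Y$ would be a cycle inside $C_N$ and therefore equal to $C_N$; but then $D_i$ exhausts $\{x_1,\dots,x_N\}$, leaving the $k-1\ge 1$ vertices of $Y$ to be covered by cycles inside the edgeless graph $G[Y]$, which is impossible. So every $D_i$ meets $Y$, and since the $D_i$ are vertex-disjoint this forces $|Y|\ge k$, contradicting $|Y|=k-1$.

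I do not expect a genuine obstacle here: the whole proof rests on that single chordlessness observation. The only point needing a little care is arranging three competing demands simultaneously — the graph must be arbitrarily large, it must be Hamiltonian (so the apex part cannot be a large independent set), and its minimum degree must be exactly $k+1$ (so the apex part cannot be a clique and the base cannot contribute much degree). Using one long cycle as the base together with exactly $k-1$ independent apex vertices is precisely what makes all three hold at once, and the degenerate case $k=1$ is simply excluded, as noted above.
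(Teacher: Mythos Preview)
Your construction and argument are correct and essentially identical to the paper's: the paper also takes a cycle of length $n-k+1$ together with an independent set of size $k-1$ joined completely to the cycle, and observes that every cycle of a $2$-factor must meet the independent set. You have simply spelled out the details (the explicit Hamilton cycle, the chordlessness of $C_N$, and the edge case $k=1$) that the paper leaves to the reader.
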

\begin{proof}
Let $G$ consist of a cycle $\mathcal{C}$ of length $n-k+1$ and an independent set $U$ of size $k-1$ with all the edges between $\mathcal{C}$ and $U$ added. It's easy to see that for $n\ge 2k$,  $G$ is Hamiltonian and has minimum degree $k+1$. However $G$ does not have a $2$-factor with $k$ components (e.g.\ because every cycle in a $2$-factor of $G$ must use at least one vertex in $U$).
\end{proof}
For fixed $k$, we do not know of any Hamiltonian graphs with non-constant minimum degree which do not have a $2$-factor with $k$ components. This indicates that the necessary minimum degree in \Cref{conj:main} may in fact be much smaller, perhaps even a constant (depending on $k$). A step in this direction was made by Pfender \cite{Pfender04} who showed that in the $k=2$ case, a Hamiltonian graph $G$ with minimum degree of $7$ contains a $2$-factor with $2$ cycles in a very special case when $G$ is claw-free. 

If one takes greater care with various parameters in \Cref{sec:prelim} one can show that a minimum degree of $\frac{Cn}{\sqrt[4]{\log \log n / (\log \log \log n)^2}}$ suffices for finding an ordered blow-up of a short cycle so in particular this minimum degree is enough to find $2$-factors consisting of a fixed number of cycles. We believe that it would be messy but not too hard to improve this a little bit further, but to reduce the minimum degree condition to $n^{1-\eps}$ would require some new ideas. On the other hand we do believe that our approach of finding alternating cycles in the auxiliary graph could still be useful in this case, but one needs to either find a better way of finding ordered blow-ups of short cycles or obtain a better understanding of how the number of cycles in $F(S)$ depends on the order and structure of a disjoint union of colour-alternating cycles $S.$ Another possibility is to augment the auxiliary graph in order to include edges that connect the front/back to the back/front vertex of two edges of the Hamilton cycle, which would allow us to obtain a $1$-to-$1$-correspondence between $2$-factors of $G$ and suitable structures in this new auxiliary graph. 

Another way of saying that a graph is Hamiltonian is that it has a $2$-factor consisting of a single cycle. A possibly interesting further question which arises is whether knowing that $G$ contains a $2$-factor consisting of $\ell$ cycles already allows the minimum degree condition needed for having a $2$-factor with $k>\ell$ cycles to be weakened. 

\textbf{Acknowledgements.} We are extremely grateful to the anonymous referees for their careful reading of the paper and many useful suggestions.

\end{document}